\documentclass[11pt]{article}
\usepackage{amsfonts}
\usepackage{color}
\usepackage{amsmath,amssymb,comment}
\newtheorem{theo}{Theorem}[section]
\newtheorem{lem}[theo]{Lemma}

\newtheorem{prop}[theo]{Proposition}

\newcommand{\mysection}[1]{\section{#1} \setcounter{equation}{0}}
\newcommand{\proof}{{\sc Proof.} \quad}
\newcommand{\proofc}{{\sc Proof} \ }
\newcommand{\be}{\begin{equation} \label}
\newcommand{\ee}{\end{equation}}
\newcommand{\bea}{\begin{eqnarray}\label}
\newcommand{\eea}{\end{eqnarray}}
\newcommand{\bas}{\begin{eqnarray*}}
\newcommand{\eas}{\end{eqnarray*}}
\newcommand{\bit}{\begin{itemize}}
\newcommand{\eit}{\end{itemize}}
\newcommand{\qed}{\hfill$\Box$ \vskip.2cm}
\newcommand{\nn}{\nonumber}
\newcommand{\R}{\mathbb{R}}
\newcommand{\N}{\mathbb{N}}
\newcommand{\pO}{\partial\Omega}

\newcommand{\io}{\int_\Omega}
\newcommand{\na}{\nabla}
\newcommand{\Del}{\Delta}
\newcommand{\del}{\delta}
\newcommand{\al}{\alpha}

\newcommand{\Lam}{\Lambda}
\newcommand{\sig}{\sigma}
\newcommand{\pa}{\partial}
\newcommand{\bom}{\overline{\Omega}}
\newcommand{\Om}{\Omega}

\newcommand{\un}{\underline}

\newcommand{\vs}{\vspace*}
\newcommand{\hs}{\hspace*}

\newcommand{\vp}{\varphi}
\newcommand{\lbal}{\left\{ \begin{array}{l}}
\newcommand{\lball}{\left\{ \begin{array}{ll}}
\newcommand{\ear}{\end{array} \right.}

\newcommand{\abs}{\\[5pt]}

\newcommand{\adb}{\allowdisplaybreaks}
\newcommand{\tm}{T_{max}}

\begin{document}
\adb
\title{Hotspot formation driven by temperature-dependent coefficients\\
in one-dimensional thermoviscoelasticity}
\author{
Michael Winkler\footnote{michael.winkler@math.uni-paderborn.de}\\
{\small Universit\"at Paderborn, Institut f\"ur Mathematik}\\
{\small 33098 Paderborn, Germany} }
\date{}
\maketitle
\begin{abstract}
\noindent 
This manuscript is concerned with a two-component evolution system generalizing the classical model
for one-dimensional thermoviscoelastic dynamics in Kelvin-Voigt materials in the presence of temperature-dependent viscosities 
and elastic stiffnesses. 
Under suitable assumptions on the growth of these ingredients and on the initial data, 
the occurrence of finite-time blow-up with respect to the $L^\infty$ norm in the temperature variable is discovered.\abs
\noindent {\bf Key words:} nonlinear acoustics; thermoviscoelasticity; viscous wave equation; blow-up\\
{\bf MSC 2020:} 74H35 (primary); 35B44, 74F05, 35L05 (secondary)
\end{abstract}
%
%
%
%
%
%
%
%
%
%
\newpage
\section{Introduction}\label{intro}
Evolution equations modeling thermoviscoelasticity have thoroughly been studied in the mathematical literature.
With regard to typical application frameworks in which temperature dependencies of 
elastic parameters and viscosities 
can be neglected, far-reaching theories of well-posedness could be established.
Classical results addressing one-dimensional scenarios
assert global solvability within large ranges of parameter functions and initial data 
(\cite{racke_zheng}, \cite{dafermos}, \cite{dafermos_hsiao_smooth}, \cite{guo_zhu}, \cite{watson})
while also in multi-dimensional settings, obstacles linked to the complex interplay of vector-valued displacements
could successfully be coped with in various particular cases of physical relevance 
(\cite{roubicek}, \cite{mielke_roubicek}, \cite{blanchard_guibe}, \cite{rossi_roubicek_interfaces13}, 
\cite{gawinecki_zajaczkowski_cpaa}, 
\cite{roubicek_nodea2013},
\cite{pawlow_zajaczkowski_cpaa17}, \cite{owczarek_wielgos}).
In some one-dimensional problems, even large-time relaxation in the flavor of stabilization toward homogeneous states has been 
shown to exhaustively determine solution behavior (\cite{racke_zheng}; cf.~also \cite{bies_cieslak}, \cite{bies_cieslak_SIMA} 
and \cite{cieslak_muha_trifunovic} for corresponding developments concerned with related viscosity-free systems).\abs
Now as recently discovered, core determinants for thermoviscoelastic dynamics may well exhibit some 
moderate but non-negligible dependencies on temperature; in fact, experimental observations 
suggest that in certain piezoceramic materials,
both the elastic stiffness parameters and the viscosities may increase with temperature
(\cite{friesen}; cf.~also \cite{CLW}, \cite{Gubinyi2007}).
For simplicity ignoring further dependencies such as on strains, and moreover assuming constancy 
of a retardation time parameter $\tau$ which in the context of standard Kelvin-Voigt modeling quantifies mechanical losses,
upon conveniently normalizing variables we are thereby led to considering the evolution system
\be{00}
	\lbal
	u_{tt} = \big(\gamma(\Theta) u_{xt}\big)_x + a \big( \gamma(\Theta) u_x\big)_x - \big(f(\Theta)\big)_x, \\[1mm]
	\Theta_t = \Theta_{xx} + \gamma(\Theta) u_{xt}^2 - f(\Theta) u_{xt},
	\ear
\ee
for the unknown displacement $u=u(x,t)$ and the temperature $\Theta=\Theta(x,t)$,
where $f(\Theta)$ measures thermal dilation and hence is explicitly to $\Theta$ as usual, 
but where now moreover also the material viscosity $\gamma(\Theta)$ and the elastic stiffness $a\gamma(\Theta)$
depend on $\Theta$, while $a=\frac{1}{\tau}$ is constant (see, e.g., \cite{CLW}).\abs
The knowledge on possible effects of such deviations from classical settings with $\gamma\equiv const.$ seems yet at a rather
early stage, and appears to be restricted to results on local-in-time well-posedness in the presence of fairly
arbitrary ingredients and initial data (\cite{fricke}, \cite{claes_win}, \cite{win_AMOP}), 
on solvability up to an arbitrary but fixed finite
time horizon under smallness conditions inter alia on $\gamma'$ and $f'$ (\cite{meyer}), 
and on global existence and boundedness of solutions in the presence of small parameters $a$, small derivatives $f'$, and small
initial data (\cite{fricke}, \cite{CLW}).
In fact, this may be viewed as reflecting reduced accessibility to classical approaches to one-dimensional thermoviscoelasticity
which have crucially relied on constancy of viscosities 
(\cite{racke_zheng}, \cite{zheng_shen}, \cite{shibata}, \cite{kim}, \cite{guo_zhu}, \cite{shen_zheng_zhu}, \cite{hsiao_luo}),
with few exceptions admitting suitably mild dependencies
on the strain $u_x$ (\cite{watson}, \cite{jiang_QAM1993}).
In the presence of non-constant $\gamma$, (\ref{00}) also appears to significantly differ from its relative in which at least
the elastic parameter is assumed to be a fixed number $b$;
Indeed, while existence results for this latter system can be derived by suitably exploiting 
a global structural property expressed in the conservation law
\bas
	\frac{d}{dt} \bigg\{
	\frac{1}{2} \io u_t^2
	+ \frac{b}{2} \io u_x^2
	+ \io \Theta \bigg\}
	= 0,
\eas
formally valid actually without any restrictions on the particular behavior of $\gamma=\gamma(\Theta)$ 
(\cite{win_ZAMP}, \cite{win_JEE}), no comparable feature seems to persist when passing on to (\ref{00}).\abs
{\bf Main results.} \quad
The present manuscript intends to indicate that consequences implied by this lack of favorable structures in (\ref{00}) 
may not be of merely methodological relevance, and to reveal that, in fact, temperature dependencies of viscosities
and elastic stiffnesses may facilitate hotspot formation in the sense of a spontaneous emergence of 
singular temperature distributions within finite time.
This will be addressed in the framework of the initial-boundary value problem 
\be{0}
	\lball
	u_{tt} = \nabla \cdot (\gamma(\Theta) \na u_t) + a \na \cdot (\gamma(\Theta) \na u) + \na\cdot f(\Theta), 
	\qquad & x\in\Om, \ t>0, \\[1mm]
	\Theta_t = D\Del\Theta + \gamma(\Theta) |\na u_t|^2 + f(\Theta)\cdot\na u_t,
	\qquad & x\in\Om, \ t>0, \\[1mm]
	u=0, \quad \frac{\pa\Theta}{\pa\nu}=0,
	\qquad & x\in\pO, \ t>0, \\[1mm]
	u(x,0)=u_0(x), \quad u_t(x,0)=u_{0t}(x), \quad \Theta(x,0)=\Theta_0(x),
	\qquad & x\in\Om,
	\ear
\ee
considered here for the real-valued unknowns $u$ and $\Theta$ and
in smoothly bounded $n$-dimensional domains $\Om$.
While for $n=1$ this immediately aligns with our original ambition to describe (\ref{00}), 
the inclusion of arbitrary $n\ge 1$, which will nowhere complexify our analysis, might hint on some independence
of the phenomenon in question from spatial dimensionality.\abs
To create an appropriate setup of suitably smooth solutions, as a starting point let us state 
the following basic result on local classical solvability and extensibility that can be proved by an almost verbatim
copy of a reasoning detailed in \cite{claes_win} for the relative of (\ref{0}) involving no-flux boundary conditions for
both solutions components:
\begin{prop}\label{prop_loc}
  Let $n\ge 1$ and $\Om\subset\R^n$ be a bounded domain with smooth boundary, let $a>0$ and $D>0$, and suppose that
  \be{gf}
	\lbal
	\mbox{$\gamma\in C^2([0,\infty))$ is such that $\gamma>0$ on $[0,\infty)$, and that} \\[1mm]
  	\mbox{$f\in C^2([0,\infty);\R^n)$ satisfies $f(0)=0$.}
	\ear
  \ee
  Then whenever
  \be{Init}
	\lbal
	u_0\in C^2(\bom)
	\mbox{ is such that $u=0$ on $\pO$,} \\[1mm]
	u_{0t}\in \bigcup_{\beta\in (0,1)} C^{1+\beta}(\bom)	
	\mbox{ is such that $u_{0t}=0$ on $\pO$, \qquad and} \\[1mm]
	\Theta_0\in \bigcup_{\beta\in (0,1)} C^{1+\beta}(\bom)
	\mbox{ satisfies $\Theta_0\ge 0$ in $\Om$ and $\frac{\pa \Theta_0}{\pa\nu}=0$ on $\pO$,}
	\ear
  \ee
  one can find $\tm\in (0,\infty]$ as well as
  \be{tl1}
	\lbal
	u\in C^{1,0}(\bom\times [0,\tm)) \cap C^{2,1}(\bom\times (0,\tm))
		\qquad \mbox{and} \\[1mm]
	\Theta\in C^{1,0}(\bom\times [0,\tm)) \cap C^{2,1}(\bom\times (0,\tm))
		\qquad \mbox{with} \\[1mm]
	u_t\in C^{1,0}(\bom\times [0,\tm)) \cap C^{2,1}(\bom\times (0,\tm))
	\ear
  \ee
  such that $\Theta\ge 0$ in $\Om\times (0,\tm)$, that $(u,\Theta)$ forms a classical solution of (\ref{0})
  in $\Om\times (0,\tm)$, and that
  \bea{Ext}
	& & \hs{-15mm}
	\mbox{if $\tm<\infty$, \quad then \quad} \nn\\
	& & \hs{-6mm}
	\limsup_{t\nearrow\tm} \Big\{ \|u_t(\cdot,t)\|_{W^{1,p}(\Om)} + \|\Theta(\cdot,t)\|_{L^\infty(\Om)} \Big\}
		=\infty
	\mbox{\quad for all $p\ge 2$ such that $p>n$.}
  \eea
\end{prop}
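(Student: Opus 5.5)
We plan to follow the fixed-point approach of \cite{claes_win}, adapted to the Dirichlet condition for $u$. The key reformulation uses $v:=u_t$. Then $u(\cdot,t)=u_0+\int_0^t v(\cdot,s)\,ds$, so the first equation in (\ref{0}) becomes a parabolic equation for $v$ with a memory term:
\be{vsys}
	v_t=\na\cdot\big(\gamma(\Theta)\na v\big)+a\na\cdot\Big(\gamma(\Theta)\na\Big(u_0+\int_0^t v\Big)\Big)+\na\cdot f(\Theta),
	\qquad v|_{\pO}=0,
\ee
coupled to the heat equation for $\Theta$ with source $\gamma(\Theta)|\na v|^2+f(\Theta)\cdot\na v$ and Neumann condition. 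Since $\gamma$ is positive on $[0,\infty)$, both equations are uniformly parabolic as long as $\Theta$ stays in a bounded set. For fixed $p>\max\{n,2\}$ and small $T>0$, we will work on the closed set
\[
S:=\Big\{(v,\Theta)\in C^0([0,T];W^{1,p}_0(\Om)\times C^0(\bom))\,:\,\|v-u_{0t}\|+\|\Theta-\Theta_0\|\le 1\Big\}.
\]
Given $(\bar v,\bar\Theta)\in S$, we freeze the coefficients $\gamma(\bar\Theta)$ and the sources, and solve the two resulting linear problems. For the $v$-equation we apply maximal Sobolev regularity for divergence-form operators with merely continuous coefficients. For the $\Theta$-equation, whose source lies in $L^\infty((0,T);L^{p/2})$, we use Neumann heat semigroup smoothing estimates; the condition $p>n$ ensures that this source yields Hölder continuity in $\Theta$.

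The second step is to show that the map $\Phi:(\bar v,\bar\Theta)\mapsto(v,\Theta)$ sends $S$ into itself and is a contraction when $T$ is small. The data regularity in (\ref{Init}), namely the $C^{1+\beta}$ assumption together with the compatibility conditions $u_{0t}=0$ on $\pO$ and $\pa_\nu\Theta_0=0$, guarantees continuity at $t=0$ in the chosen spaces. The factor $\int_0^t\bar v$ gives smallness of order $T$. Local Lipschitz continuity of $\gamma$ and $f$, which follows from their $C^2$ regularity, controls the differences.

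The main obstacle is the quadratic term $|\na v|^2$, because it requires control of $\na v$ that the $W^{1,p}$ framework only just provides, with $L^{p/2}$ integrability. We plan to handle it through the semigroup estimate
\[
\|e^{t\Del}\psi\|_{L^\infty}\le C\,t^{-\frac n{2}\cdot\frac 2p}\|\psi\|_{L^{p/2}},
\]
where the exponent $n/p<1$ is integrable in time. Together with maximal regularity, this should close the estimates.

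After obtaining the fixed point, we will bootstrap the regularity using Schauder theory. We first get Hölder regularity of $\Theta$, which makes the coefficients Hölder continuous; this yields $v\in C^{1+\beta,\cdot}$ and then the full classical regularity (\ref{tl1}), including $u=u_0+\int v\in C^{2,1}$. Nonnegativity of $\Theta$ follows from the comparison principle: since $f(0)=0$, the source term $\gamma|\na v|^2+f\cdot\na v$ vanishes where $\Theta=0$. Because the existence time depends only on bounds for $\|u_{0t}\|_{W^{1,p}}$ and $\|\Theta_0\|_{L^\infty}$, the standard continuation argument gives the maximal time $\tm$ and the criterion (\ref{Ext}) for every admissible $p$.
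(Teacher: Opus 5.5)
\textbf{There is a genuine gap: the fixed-point space you chose does not close, and the continuation step is misstated.}

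The paper gives no argument of its own here; it only defers to \cite{claes_win}. Your overall architecture is of that type: a fixed point for $(u_t,\Theta)$ with $u=u_0+\int_0^t u_t$, then a bootstrap, comparison and continuation.

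\textbf{Self-mapping fails.} For $\bar\Theta\in S$, which is merely continuous, the $v$-problem can only be treated in divergence form. Maximal $L^q$-regularity of $-\na\cdot(\gamma(\bar\Theta)\na\,\cdot)$ in $W^{-1,p}$ then gives $v\in L^q(0,T;W^{1,p}_0)\cap W^{1,q}(0,T;W^{-1,p})$. Such $v$ is continuous in time only with values in $(W^{-1,p},W^{1,p}_0)_{1-1/q,q}$, a Besov space strictly larger than $W^{1,p}_0$. The endpoint you need, a bound on $\sup_t\|\na v(\cdot,t)\|_{L^p}$, actually fails for forcings $\na\cdot F$ with $F$ merely continuous on $\bom\times[0,T]$. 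Here is a counterexample.
\begin{itemize}
\item Consider $v_t=v_{xx}+F_x$ in $(0,\pi)$, with $v=0$ at $x\in\{0,\pi\}$ and $v(\cdot,0)=0$.
\item Take $F(x,t)=\sum_k k^{-1/2}g_k(t)\cos(2^kx)$. Each $g_k$ is continuous, $0\le g_k\le 1$, supported in $[t_\star-2\cdot 4^{-k},t_\star-4^{-k}]$, and equal to $1$ on the middle half of that interval.
\item Then $F$ is continuous, since the supports are disjoint and $|F|\le k^{-1/2}$ near $t_\star$.
\item However, $v_x(\cdot,t_\star)=-\sum_k k^{-1/2}c_k\cos(2^kx)$ with $c_k\ge\int_{5/4}^{7/4}e^{-r}\,dr$. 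Hence $\|v_x(\cdot,t_\star)\|_{L^2}^2=\infty$.
\end{itemize}
With $n=1$, $\gamma\equiv 1$, $f(\xi)=\xi$, $\Theta_0\equiv 1$ and $\bar\Theta=1+F\in S$, this is exactly the contribution of $\na\cdot f(\bar\Theta)$. So $\Phi(S)\subset S$ fails for every $T$.

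\textbf{No uniform constants on $S$.} Independently, $W^{1,p}$-estimates for $-\na\cdot(\gamma(\bar\Theta)\na\,\cdot)$ depend on the modulus of continuity of the coefficient. A sup-norm ball of radius $1$ does not control that modulus, so you also lack uniform constants for the contraction. Moreover, $\gamma$ is undefined where $\bar\Theta<0$, which your ball permits.

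\textbf{A possible repair.} Measure $v$ in $L^q(0,T;W^{1,p}_0)$ with $\frac np+\frac 2q<1$. Your semigroup bound still gives $\Theta\in C^0$, because $\int_0^t(t-s)^{-n/p}\|g(s)\|_{L^{p/2}}ds$ is controlled by $\|g\|_{L^{q/2}((0,T);L^{p/2})}$ as soon as $\frac np\cdot\frac{q}{q-2}<1$. In addition, extend $\gamma$ and $f$ to $(-\infty,0)$, and shrink the $\Theta$-ball so that $\gamma(\bar\Theta)$ is a small $L^\infty$-perturbation of $\gamma(\Theta_0)$. Alternatively, work in H\"older spaces from the outset.

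\textbf{The continuation step needs a missing bootstrap.} In any such scheme the local existence time depends on the modulus of continuity of $\Theta_0$. It enters through the maximal-regularity constants of $-\na\cdot(\gamma(\Theta_0)\na\,\cdot)$ and through how long $e^{tD\Del}\Theta_0$ stays in a small sup-norm ball. It does not depend only on $\|u_{0t}\|_{W^{1,p}}$ and $\|\Theta_0\|_{L^\infty}$, so your closing sentence does not yield (\ref{Ext}). The missing step is as follows.
\begin{itemize}
\item Suppose $\|u_t(\cdot,t)\|_{W^{1,p}}+\|\Theta(\cdot,t)\|_{L^\infty}$ stayed bounded up to a finite $\tm$.
\item Then $\na u=\na u_0+\int_0^t\na u_t$ is bounded in $L^p$.
\item The heat source $\gamma(\Theta)|\na u_t|^2+f(\Theta)\cdot\na u_t$ is bounded in $L^\infty((0,\tm);L^{p/2}(\Om))$, with $\frac p2>\frac n2$.
\item Hence $\Theta$ is uniformly H\"older continuous on $[\tau,\tm)$.
\end{itemize}
Only this uniform modulus of continuity lets you restart with a uniform time step, and this is where $p>n$ really enters the criterion.

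\textbf{Minor point on nonnegativity.} The source does not vanish at $\Theta=0$. Comparison works because $\gamma(\Theta)|\na v|^2\ge 0$ and $f(\Theta)\cdot\na v=\Theta\, b$, where $b=\na v\cdot\int_0^1 f'(s\Theta)\,ds$ is bounded, using $f(0)=0$.
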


\vs{4mm}
{\bf A refined extensibility criterion: Necessity of singular $\Theta$ at finite-time blow-up.} \quad
The first of our specific goals now is to make sure that any finite-time singularity formation in (\ref{0}) must actually 
go along with a phenomenon of immediate physical relevance:
Indeed, (\ref{Ext}) can be refined in such a way that its revised version actually asserts unboundedness with respect to the norm
$L^\infty$ already of the temperature variable alone, hence indicating that some genuine hotspot formation must
occur in any such event.
This difference from classical results on $C^2$-blow-up in one- and higher-dimensional models for
thermoelasticity (\cite{racke_bu}, \cite{dafermos_hsiao}, \cite{hrusa}) may be regarded as a consequence of the mere presence
of viscosity. 
In fact, by strongly relying on corresponding dissipative effects in the course of variational
arguments firstly at zero-order level in the framework of a Moser iteration in Section \ref{sect2}, 
and then at first order in Section \ref{sect3}, we can sharpen (\ref{Ext}) as follows.
\begin{prop}\label{prop_ext}
  Suppose that $n\ge 1$ and $\Om\subset\R^n$ is a bounded domain with smooth boundary, that $a>0$ and $D>0$, 
  and that (\ref{gf}) and (\ref{Init}) hold.
  Then the quantities $\tm,u$ and $\Theta$ obtained in Proposition \ref{prop_loc} are such that, actually,
  \be{ext}
	\mbox{if $\tm<\infty$, \quad then \quad} 
	\limsup_{t\nearrow\tm} \|\Theta(\cdot,t)\|_{L^\infty(\Om)} 
	= \infty.
  \ee
\end{prop}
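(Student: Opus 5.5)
Arguing by contradiction, I assume $\tm<\infty$ and $\|\Theta(\cdot,t)\|_{L^\infty(\Om)}\le M$ for all $t\in(0,\tm)$. By (\ref{Ext}) it then suffices to bound $\|u_t(\cdot,t)\|_{W^{1,p}(\Om)}$ on $(0,\tm)$ for some $p\ge 2$ with $p>n$. Since $\gamma$ is continuous and positive on $[0,M]$, I may fix $0<\gamma_1\le\gamma(\Theta)\le\gamma_2$ and $|f(\Theta)|+|\gamma'(\Theta)|+|f'(\Theta)|\le c_1$ throughout. The $u$-equation then becomes a uniformly parabolic equation for $v:=u_t$, with bounded measurable coefficient $\gamma(\Theta)$ and forcing terms in divergence form.

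\textbf{Step 1: energy and zero-order estimates.} I test the first equation by $u_t$. This gives
\[
\frac12\frac{d}{dt}\io u_t^2+\io\gamma(\Theta)|\na u_t|^2=-a\io\gamma(\Theta)\na u\cdot\na u_t-\io f(\Theta)\cdot\na u_t .
\]
Using $\frac{d}{dt}\io|\na u|^2=2\io\na u\cdot\na u_t$, Young's inequality, and Gronwall on the finite interval, I expect bounds on $\io u_t^2+\io|\na u|^2$ and $\int_0^{\tm}\io|\na u_t|^2$. In the spirit of Section \ref{sect2}, I then run a Moser-type iteration: test with $|u_t|^{p-2}u_t$ and handle the terms $\na\cdot(\gamma\na u)$ and $\na\cdot f$ through the relation $u(t)=u_0+\int_0^t u_t$. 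The aim is to bound $\|u_t\|_{L^p}$ for all finite $p$, and also $\|\na u\|_{L^p}$ via $\na u=\na u_0+\int_0^t\na u_t$.

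\textbf{Step 2: first-order estimate (the main obstacle).} This is where I expect the difficulty, because $\gamma(\Theta)$ is only known to be bounded, so differentiating the $v$-equation produces $\gamma'(\Theta)\na\Theta$. I plan to use the combination $w:=u_t+a u$, which satisfies
\[
w_t=\na\cdot(\gamma(\Theta)\na w)+\na\cdot f(\Theta)+a u_t .
\]
This is a divergence-form parabolic equation with bounded measurable uniformly elliptic coefficients and an $L^\infty$ flux. By De Giorgi--Nash/Moser, I expect H\"older bounds for $w$ but not gradient bounds. To obtain gradient bounds I would first control $\na\Theta$ through the heat equation. Its source $\gamma|\na u_t|^2+f\cdot\na u_t$ lies in $L^1$ in space-time by Step 1; more usefully, testing the $\Theta$-equation by $\Theta$ gives $\int\!\!\int|\na\Theta|^2\le C+C\int\!\!\int|\na u_t|^2$. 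I would then test the $w$-equation by $-\Del w$ (equivalently, estimate $\io|\na w|^2$ and $\io|\na w|^{p}$). The resulting bad term $\io\gamma'(\Theta)(\na\Theta\cdot\na w)\Del w$ is absorbed by $\gamma_1\io|\Del w|^2$, and I close using a simultaneous functional $\io|\na w|^2+\eta\io|\na\Theta|^2$. This in turn requires estimating $\io|\na\Theta|^2$ by testing the heat equation with $-\Del\Theta$ and using Gagliardo--Nirenberg with $\|\Theta\|_{L^\infty}\le M$ to control $\io|\na\Theta|^4\le CM^2\io|\Del\Theta|^2$. The smallness of the parameter $\eta$ and the $L^\infty$ bound on $\Theta$ should make these couplings absorbable.

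\textbf{Step 3: conclusion.} Once I have bounds on $\io|\na\Theta|^4$-type quantities together with $\int\!\!\int|\Del w|^2$, I bootstrap using maximal Sobolev regularity for the uniformly parabolic $w$-equation, whose now H\"older-continuous coefficient $\gamma(\Theta)$ is available after a parabolic H\"older estimate for $\Theta$. This yields $\|\na u_t\|_{L^p}$ bounds for some $p>\max\{2,n\}$ on $(\tm/2,\tm)$. That contradicts (\ref{Ext}) and proves (\ref{ext}).
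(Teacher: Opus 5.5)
Your proposal has a genuine gap in Step 2, which is exactly where the difficulty of the proof lies: the mechanism you propose for closing the first-order estimate fails.

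\textbf{Why Step 2 does not close.} After absorbing $\io\gamma'(\Theta)(\na\Theta\cdot\na w)\Del w$, you are left with a term $C\io|\na\Theta|^2|\na w|^2$. Testing the heat equation by $-\Del\Theta$ leaves $C\eta\|\Del\Theta\|_{L^2}\|\na u_t\|_{L^4}^2$, and this contains $\|\na w\|_{L^4}^2$. With only $L^\infty$ information on $w$ and $\Theta$, the available interpolations are $\io|\na w|^4\le c\|w\|_{L^\infty}^2\io|D^2w|^2$ and $\io|\na\Theta|^4\le cM^2\io|D^2\Theta|^2$. So both cross terms have the form $K\|D^2\Theta\|_{L^2}\|D^2w\|_{L^2}$, with $K\sim M\|w\|_{L^\infty}$ from the $w$-part and $K\sim\eta\|w\|_{L^\infty}$ from the $\Theta$-part. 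Absorbing their sum into $\gamma_1\|D^2w\|_{L^2}^2+\eta D\|D^2\Theta\|_{L^2}^2$ requires, by AM--GM, a smallness condition on $M\|w\|_{L^\infty}^2$ relative to $D$ and the bounds for $\gamma,\gamma'$. That condition does not involve $\eta$, so tuning $\eta$ cannot help.

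Nor can the constant $c$ be made small for merely bounded functions. Take $\vp_\lambda(x)=\phi(\lambda x)$ with $\phi$ supported near an interior point. Then $\|\vp_\lambda\|_{L^\infty}$ stays fixed, while the ratio $\io|\na\vp_\lambda|^4/\io|D^2\vp_\lambda|^2$ is independent of $\lambda$. If you interpolate against $\|\na w\|_{L^2}$ instead, you only get superlinear differential inequalities, which give no bound uniform up to $\tm$.

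\textbf{The missing ingredient.} What you need is an Ehrling-type inequality with an arbitrarily small constant, $\io|\na\vp|^{p+2}\le\epsilon\io|\na\vp|^{p-2}|D^2\vp|^2+C_\epsilon\|\vp\|_{L^\infty}^{p+2}$. It holds uniformly over families sharing a common modulus of continuity (Lemma \ref{lem_LW}). You already have the required input, namely the uniform H\"older bound for $w$ from De Giorgi--Nash, but you never use it. The paper proceeds as follows:
\begin{itemize}
\item it applies this inequality to $v=u_t+au$;
\item it uses the non-small Lemma \ref{lem2} for $\Theta$;
\item it arranges Young's inequality so that all smallness falls on the $\Theta$-side and all largeness on the $v$-side;
\item it works directly with $\io|\na v|^p+\io|\na u|^{p+2}+\io|\na\Theta|^p$ for $p>n$, handling boundary integrals via Lemmas \ref{lem71} and \ref{lem72}.
\end{itemize}
This contradicts (\ref{Ext}) directly, with no H\"older estimate for $\Theta$ and no maximal regularity. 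Your Step 3 would anyway need $|\na u_t|^2\in L^q$ with $q>\frac{n+2}{2}$ to make $\Theta$ H\"older continuous, and $L^2$-level gradient bounds do not give this for $n\ge 2$.

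\textbf{A further problem in Step 1.} A Moser iteration with test function $|u_t|^{p-2}u_t$ produces $\io\gamma(\Theta)|u_t|^{p-2}\na u\cdot\na u_t$, which requires high integrability of $\na u$. Writing $u=u_0+\int_0^t u_t$ is circular at that stage. The iteration must instead be run for $w=u_t+au$ coupled with the ODE $u_t=w-au$, where the term $\na\cdot(\gamma\na u)$ disappears (Lemmas \ref{lem91}--\ref{lem94}).
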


\vs{4mm}
{\bf Occurrence of finite-time blow-up for superlinearly growing $\gamma$.} \quad
In Section \ref{sect4}, the core of our analysis will thereafter be devoted to the detection of blow-up under suitable
assumptions mainly on the crucial ingredient $\gamma$.
In the context of an argument centered around an inequality of the form
\be{001}
	\frac{d}{dt} \io \psi(\Theta)
	+ \frac{1}{2} \io |\na u_t|^2
	\le C,
	\quad t\in (0,T),
	\qquad 
	\psi(\xi)= - \int^\xi \frac{d\sig}{\gamma(\sig)},
	\quad \xi> 0,
\ee
with some $C>0$ depending on $f,\gamma$ and $T>0$ in a suitably cotrollable manner (Lemma \ref{lem601}),
a key challenge to be overcome will consist in adequately controlling the favorably signed but potentially oscillatory
contribution $|\na u_t|^2$ to the second equation in (\ref{0}) from below.
Based on the simple observation that
\be{01}
	\io |\na u_0|^2
	\le 2 \io |\na u(\cdot,t)|^2 + 2t\int_0^t \io |\na u_t|^2
\ee
(Lemma \ref{lem63}), this will be achieved 
by making substantial use of the particular link between viscosity and elastic parameters in (\ref{0}),
which will, namely, enable us to derive 
an upper bound for 
\bas
	\int_0^T \io |\na u_t + a\na u|^2 
\eas
through an appropriate testing procedure
(Lemma \ref{lem61}).
Due to the upper bound for $\int_0^T \io |\na u|^2$ in terms of $\int_0^T \io |\na u_t|^2$ thereby implied, within suitably 
small time intervals this can be used together with (\ref{01}) to indeed confirm that with respect to the occurrence of blow-up,
the heat evolution subsystem of (\ref{0})
shares essential features of the corresponding ODEs
\be{02}
	\theta_t = c \gamma(\theta)
\ee
with $c>0$.\abs
Suitably tracing dependencies in the respectively obtained inequalities 
especially on all components of the initial data will draw two different consequences
on explosion-enforcing constellations in (\ref{0}).
In fact, the first of our main results in this direction will show that for any nondecreasing function $\gamma$ which
is such that positive solutions of (\ref{02}) blow up, 
and within large classes of $f$,
sufficiently strong initial strains enforce the emergence of hotspots in finite time.
We emphasize that the following result in this regard does not rely on any 
assumption on $\Theta_0$ beyond the mere hypothesis in (\ref{Init}),
and we note that the requirements on $\gamma$ and $f$ in (\ref{gp}), (\ref{gi}) and (\ref{gf1}) are particularly satisfied if, e.g., 
\bas
	\gamma(\xi)=\gamma_\star \cdot (\xi+1)^\mu
	\quad \mbox{and} \quad
	f(\xi)=\xi f_\star 
	\qquad \mbox{for all } \xi\ge 0
\eas
with some 
\bas
	\mu \ge 2
\eas
and some $\gamma_\star>0$ and $f_\star \in \R^n$:
\begin{theo}\label{theo65}
  Let $n\ge 1$ and $\Om\subset\R^n$ be a bounded domain with smooth boundary, let $a>0$, assume that (\ref{gf}) holds,
  and suppose that moreover
  \be{gp}
	\gamma'(\xi)\ge 0
	\qquad \mbox{for all } \xi>0
  \ee
  and
  \be{gi}
	\int_0^\infty \frac{d\xi}{\gamma(\xi)} < \infty,
  \ee
  and that there exists $\Lam>0$ such that
  \be{gf1}
	\frac{|f|^2(\xi)}{\gamma(\xi)} \le \Lam
	\qquad \mbox{for all } \xi\ge 0.
  \ee
  Then for all $T>0$ there exists $C=C(a,\gamma,f,T)>0$ such that whenever $D>0$ and (\ref{Init}) holds with
  \be{65.1}
	\io |\na u_0|^2 \ge C \io u_0^2 + C \io u_{0t}^2 + C,
  \ee
  the corresponding solution of (\ref{0}) blows up before or at time $T$: In Proposition \ref{prop_loc}, we then have
  $\tm \le T$ and 
  \be{65.2}
	\limsup_{t\nearrow\tm} \|\Theta(\cdot,t)\|_{L^\infty(\Om)} = \infty.
  \ee
\end{theo}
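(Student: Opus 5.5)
We argue by contradiction: we assume $\tm>T$, so that the solution is classical on $\bom\times[0,T]$, and derive incompatible inequalities. By Proposition \ref{prop_ext}, $\tm\le T$ then already yields (\ref{65.2}).

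\textbf{Step 1: the entropy-type functional.} Let
\[
\psi(\xi):=\int_\xi^\infty \frac{d\sig}{\gamma(\sig)}, \qquad \xi\ge 0,
\]
which is finite by (\ref{gi}), positive, decreasing and bounded by $\psi(0)$. Testing the second equation in (\ref{0}) by $\psi'(\Theta)=-1/\gamma(\Theta)$ gives
\[
\frac{d}{dt}\io\psi(\Theta) = -D\io \frac{\gamma'(\Theta)}{\gamma^2(\Theta)}|\na\Theta|^2 - \io|\na u_t|^2 - \io \frac{f(\Theta)\cdot\na u_t}{\gamma(\Theta)} .
\]
Here the Neumann boundary condition removes boundary terms. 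The first term is nonpositive by (\ref{gp}). For the last term, Young's inequality and (\ref{gf1}) give $\frac{|f|}{\gamma}|\na u_t|\le \frac12|\na u_t|^2+\frac{\Lam}{2\gamma(\Theta)}\le \frac12|\na u_t|^2+\frac{\Lam}{2\gamma(0)}$, using monotonicity of $\gamma$. This yields (\ref{001}) with $C=\Lam|\Om|/(2\gamma(0))$. Integrating over $(0,T)$ and using $0\le\psi\le\psi(0)$ gives
\[
\int_0^T\io|\na u_t|^2\le 2|\Om|\psi(0)+C_1T=:K,
\]
a bound independent of $D$ and of the initial data.

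\textbf{Step 2: bound on $\int_0^T\io|\na u_t+a\na u|^2$.} Set $w:=u_t+au$. Since $\gamma$ is common to the viscous and elastic parts, the first equation reads $w_t-au_t=\na\cdot(\gamma(\Theta)\na w)+\na\cdot f(\Theta)$, with $w=0$ on $\pO$. Testing by $w$ gives
\[
\frac12\frac{d}{dt}\io w^2+\io\gamma(\Theta)|\na w|^2 = a\io u_tw-\io f(\Theta)\cdot\na w .
\]
By (\ref{gf1}), the last term is at most $\frac12\io\gamma|\na w|^2+\frac{\Lam}{2}|\Om|$. Writing $u_t=w-au$ and using Poincaré's inequality, $a\io u_tw\le c\io w^2+c\io|\na u|^2$. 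Since $\gamma\ge\gamma(0)$, we obtain a bound on $\int_0^T\io|\na w|^2$ in terms of $\io w^2(0)\le 2\io u_{0t}^2+2a^2\io u_0^2$, $\int_0^T\io|\na u|^2$ and $T$, after a Gronwall step. The remaining term $\int_0^T\io|\na u|^2$ is estimated through $\na u(t)=\na u_0+\int_0^t\na u_t$. This puts $\io|\na u_0|^2$ on the right-hand side, and I expect this to be the main obstacle.

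To avoid it, I would instead use $\na u_t+a\na u=\na w$ and solve the linear ODE $\na u_t=-a\na u+\na w$. This gives $\na u(t)=e^{-at}\na u_0+\int_0^te^{-a(t-s)}\na w(s)\,ds$, so that $\io|\na u(t)|^2\ge \frac12 e^{-2at}\io|\na u_0|^2 - t\int_0^t\io|\na w|^2$, and $\int_0^T\io|\na u|^2\le c_T(\io|\na u_0|^2+\int_0^T\io|\na w|^2)$. Inserting this into the energy inequality carelessly is circular. I would rather test by $w$ with the term $a\io u_tw = a\io w^2-a^2\io uw$ and bound $a^2\io uw\le \frac{a^2}2\io w^2+\frac{a^2}{2}\io u^2$. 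Since $u_t=w-au$, we have $\frac{d}{dt}\io u^2\le \io w^2-a\io u^2$, hence $\io u^2$ is controlled by $\io u_0^2+\int\io w^2$. A Gronwall argument then bounds $\sup_t\io w^2+\int_0^T\io|\na w|^2$ by $C(\io u_0^2+\io u_{0t}^2+1)$. In this version $\io|\na u_0|^2$ never enters.

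\textbf{Step 3: contradiction.} On the one hand, $\na u=\na w-\na u_t$ gives
\[
\int_0^T\io|\na u|^2\le 2\int_0^T\io|\na w|^2+2K\le C_2\Big(\io u_0^2+\io u_{0t}^2+1\Big).
\]
On the other hand, (\ref{01}) integrated over $t\in(0,T)$ gives
\[
T\io|\na u_0|^2\le 2\int_0^T\io|\na u|^2+T^2K .
\]
Combining the two yields $\io|\na u_0|^2\le C_3(\io u_0^2+\io u_{0t}^2+1)$ with $C_3=C_3(a,\gamma,f,T)$. Choosing $C>C_3$ in (\ref{65.1}) contradicts this, so $\tm\le T$, and Proposition \ref{prop_ext} gives (\ref{65.2}). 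All constants are independent of $D$ because the diffusion term was simply discarded by sign.
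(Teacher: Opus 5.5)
Your proposal is correct and follows essentially the paper's route. It uses the entropy inequality from testing the heat equation with $\psi'(\Theta)=-1/\gamma(\Theta)$ (Lemma \ref{lem601}), then an $L^2$ energy estimate for $w=u_t+au$ bounding $\int_0^T\int_\Omega|\nabla w|^2$ by $C(\int_\Omega u_0^2+\int_\Omega u_{0t}^2+1)$ (Lemma \ref{lem61}; the paper cancels the cross terms exactly via $\frac12\int_\Omega v^2+\frac{a^2}{2}\int_\Omega u^2$ instead of using Young), then the time-integrated representation $\nabla u=\nabla u_0+\int_0^t\nabla u_t$ (Lemmas \ref{lem63}--\ref{lem64}), and it closes with $\gamma(\Theta)\ge\gamma(0)$ and $\psi\le\psi(0)$. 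The only slips are harmless constants: it should read $a\nabla u=\nabla w-\nabla u_t$, and $\frac{d}{dt}\int_\Omega u^2\le\frac1a\int_\Omega w^2-a\int_\Omega u^2$ (your coefficient $1$ can fail for $a<1$); neither affects the conclusion.
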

Alternatively, however, also arbitrarily small initial distributions $u_0$ and $u_{0t}$ may lead to blow-up,
provided that the temperature is suitably large throughout the domain at the initial instant:
\begin{theo}\label{theo66}
  Let $n\ge 1$ and $\Om\subset\R^n$ be a smoothly bounded domain, and suppose that $a>0$,
  and that $\gamma$ and $f$ satisfy (\ref{gf}), (\ref{gp}), (\ref{gi}) and (\ref{gf1}) with some $\Lam>0$.
  Then given any $T>0, \eta>0$ and $M>0$, one can find $C(a,\gamma,f,T,\eta,M)>0$ with the property that if $D>0$, and if
  (\ref{Init}) is satisfied with
  \be{66.1}
	\io |\na u_0|^2 \ge \eta
  \ee
  and
  \be{66.2}
	\io u_0^2 + \io u_{0t}^2 \le M,
  \ee
  then assuming that
  \be{66.3}
	\Theta_0(x) \ge C(a,\gamma,f,T,\eta,M)
	\qquad \mbox{for all } x\in\Om
  \ee
  ensures that for the solution of (\ref{0}) from Proposition \ref{prop_loc} we have $\tm\le T$, and that (\ref{65.2}) holds.
\end{theo}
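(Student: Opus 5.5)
We argue by contradiction and track the dependence of all constants in the energy-type inequalities announced around (\ref{001}) and (\ref{01}). Suppose $\tm>T$, so that the solution is classical on $\Om\times[0,T]$. By Proposition \ref{prop_ext}, $\tm\le T$ already yields (\ref{65.2}), so it suffices to show that $\tm>T$ is impossible once $\Theta_0$ is large enough.

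We define
\bas
	\psi(\xi):=\int_\xi^\infty \frac{d\sig}{\gamma(\sig)}, \qquad \xi\ge 0 .
\eas
This is finite by (\ref{gi}), positive, and decreasing, with $\psi'=-1/\gamma$. We test the second equation in (\ref{0}) against $\psi'(\Theta)$. The diffusion term contributes $D\io \psi''(\Theta)|\na\Theta|^2 = D\io \frac{\gamma'(\Theta)}{\gamma^2(\Theta)}|\na\Theta|^2\ge 0$ by (\ref{gp}); it sits on the favorable side and can be dropped. The source terms give $-\io|\na u_t|^2-\io \frac{f(\Theta)}{\gamma(\Theta)}\cdot\na u_t$. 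Using Young's inequality together with (\ref{gf1}) we obtain
\bas
	\frac{d}{dt}\io\psi(\Theta)+\frac12\io|\na u_t|^2\le \frac{\Lam}{2}\,|\Om| .
\eas
Integrating over $(0,T)$ and using $\psi\ge 0$ gives
\bas
	\frac12\int_0^T\io|\na u_t|^2\le \io\psi(\Theta_0)+\frac{\Lam}{2}|\Om|T \le |\Om|\,\psi(C_0)+\frac{\Lam}{2}|\Om| T
\eas
whenever $\Theta_0\ge C_0$. Since $\psi(C_0)\to 0$ as $C_0\to\infty$, large initial temperatures make the left-hand side only slightly larger than $\Lam|\Om|T$. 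We then need a matching lower bound for $\int_0^T\io|\na u_t|^2$ that is independent of $\Theta_0$ and grows with $\eta$.

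For the lower bound we use the elastic–viscous coupling, in the spirit of Lemma \ref{lem61}. Setting $w:=u_t+au$, the first equation becomes $u_{tt}=\na\cdot(\gamma(\Theta)\na w)+\na\cdot f(\Theta)$. Testing with $w$ gives
\bas
	\frac{d}{dt}\Big\{\frac12\io u_t^2+a\io uu_t\Big\}-a\io u_t^2+\io\gamma(\Theta)|\na w|^2=-\io f(\Theta)\cdot\na w .
\eas
Since $\gamma\ge\gamma(0)>0$ by (\ref{gp}), and by (\ref{gf1}), the right-hand side is absorbed into $\frac12\io\gamma|\na w|^2+\frac{\Lam}{2}|\Om|$. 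The term $a\io u_t^2$ is controlled by Poincaré's inequality through $\io|\na u_t|^2$, and the boundary terms $\io u_t^2+\io uu_t$ at times $0$ and $t$ are controlled by $M$ via (\ref{66.2}) and by $\io u^2(\cdot,t)\le C(M)(1+\int_0^t\io|\na u_t|^2)$. This yields
\bas
	\int_0^T\io|\na u_t+a\na u|^2\le C(M,T)\Big(1+\int_0^T\io|\na u_t|^2\Big),
\eas
hence the same form of bound for $\int_0^T\io|\na u|^2$. Combining this with (\ref{01}), integrated over $t\in(0,\tau)$ for some small $\tau\le T$, we get
\bas
	\tau\eta\le 2\int_0^\tau\io|\na u|^2+\tau^2\int_0^\tau\io|\na u_t|^2 .
\eas
The reverse estimate then gives $\int_0^T\io|\na u_t|^2\ge c(\eta,M,T)>0$.

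The main obstacle is that this lower bound $c$ must beat the fixed constant $\Lam|\Om|T$ from the first step, and a fixed $\eta$ alone does not guarantee this. For that reason I would not discard the term $\io\psi(\Theta)$ so crudely. Along a trajectory that exists until $T$, $\io\psi(\Theta(\cdot,t))\ge 0$ must persist, but the ODE comparison (\ref{02}) shows that the dissipation actually drives $\psi$ downward at rate $\io|\na u_t|^2$ only up to $O(\Lam)$ errors. My first attempt is to work on a short interval $(0,\tau)$, with $\tau=\tau(\eta,M)$ chosen so small that the $\Lam\tau$ error is dominated by the $\eta$-driven lower bound, of order $\eta/\tau$ after optimizing the inequality above. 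The requirement then reads $\psi(C_0)|\Om|<c\,\eta/\tau-\Lam|\Om|\tau$. Choosing $\tau$ first and then $C_0=C(a,\gamma,f,T,\eta,M)$ large produces the contradiction $0\le\io\psi(\Theta(\cdot,\tau))<0$. Hence $\tm\le\tau\le T$, and Proposition \ref{prop_ext} yields (\ref{65.2}).
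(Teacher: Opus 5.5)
There is a genuine gap, exactly at what you call the main obstacle: apart from $\io\psi(\Theta_0)$, none of the terms competing with $\eta$ is ever made small, and the short-interval device does not repair this.

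First, the claimed lower bound $\int_0^T\io|\na u_t|^2\ge c(\eta,M,T)>0$ does not follow from your estimates. Your energy test only uses $\gamma(\Theta)\ge\gamma(0)$. It therefore gives $\int_0^\tau\io|\na u|^2\le K(1+X)$ with $X:=\int_0^\tau\io|\na u_t|^2$. Here $K$ contains contributions proportional to $\frac{1}{\gamma(0)}\big\{\frac12\io u_{0t}^2+a\io u_0u_{0t}+\frac{a^2}{2}\io u^2(\cdot,\tau)\big\}$, of size up to order $M/\gamma(0)$, and these do not vanish as $\tau\to0$. Inserted into $\tau\eta\le 2\int_0^\tau\io|\na u|^2+\tau^2X$, this only gives $X\ge(\tau\eta-2K)/(2K+\tau^2)$. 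That bound is void whenever $\tau\eta\le 2K$, which is precisely the regime of Theorem \ref{theo66}, where $\eta$ may be tiny and $M$ large. What you have is really the mechanism of Theorem \ref{theo65}.

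Second, the error in your $\psi$-inequality is a fixed multiple of $\Lam|\Om|T$, independent of $\Theta_0$. (It should read $\frac{\Lam|\Om|T}{2\gamma(0)}$, since $|f|^2/\gamma^2\le\Lam/\gamma$.) Shrinking the interval does not help. The inequality $\tau\eta\le 2\int_0^\tau\io|\na u|^2+\tau^2X$ cannot produce $X\gtrsim\eta/\tau$, because its first right-hand term is itself about $\tau\io|\na u_0|^2$ for small $\tau$. Indeed, $X=O(\tau)$ for every solution, since $\na u_t$ is continuous on $\bom\times[0,\tm)$. At best your lower bound is linear in $\tau$, like the error $\Lam|\Om|\tau$. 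You are then back to comparing a multiple of $\eta$ with a multiple of $\Lam|\Om|$, which fails for small $\eta$.

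The missing idea is that a large initial temperature keeps the viscosity large on all of $(0,T)$, and that the factor $1/\gamma(\Theta)$ must be kept in every Young inequality. The paper proceeds as follows.
\begin{itemize}
\item By comparison, $\Theta\ge\inf_\Om\Theta_0-\frac{\Lam t}{4}$ (Lemma \ref{lem99}, using $\gamma|\na u_t|^2+f\cdot\na u_t\ge-\frac{|f|^2}{4\gamma}\ge-\frac{\Lam}{4}$). So $\Theta_0\ge C\ge\frac{\Lam T}{2}$ implies $\gamma_0(T)\ge\gamma(\frac{C}{2})$, and (\ref{gp}) together with (\ref{gi}) force $\gamma(\xi)\to\infty$ as $\xi\to\infty$.
\item The error rate in the $\psi$-test then becomes $\frac{\Lam}{2}\io\frac{1}{\gamma(\Theta)}\le\frac{\Lam|\Om|}{2\gamma_0(T)}$ (Lemma \ref{lem601}).
\item Testing the $v$-equation by $v=u_t+au$ and closing a Gronwall inequality for $\frac12\io v^2+\frac{a^2}{2}\io u^2$ (so no Poincar\'e term in $\io|\na u_t|^2$ arises) yields $\int_0^T\io|\na u_t+a\na u|^2\le\frac{c(a,T)\,(M+\Lam|\Om|)}{\gamma_0(T)}$ (Lemma \ref{lem61}).
\item Combined with (\ref{63.1}) integrated over the whole of $(0,T)$, this gives (\ref{64.1}). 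Its right-hand side is at most $c_1|\Om|\psi(C)+\frac{(c_2+c_3)M+c_4}{\gamma(C/2)}$, with $c_1,\dots,c_4$ as in (\ref{66.7})--(\ref{66.10}), and this is $<\eta$ for large $C$.
\end{itemize}
In words: huge viscosity forces $\na u_t\approx-a\na u$, so $\na u$ cannot keep size $\eta$ while $\int_0^T\io|\na u_t|^2$ stays below $2\io\psi(\Theta_0)+o(1)$. Your argument never uses this link between large $\Theta$ and smallness of $\na u_t+a\na u$.
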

{\bf Remark.} \quad
i) \ Our argument will actually show that whenever $T>0$, blow-up before or at time $T>0$ occurs if $a>0$ and $D>0$, if
$\gamma$ and $f$ satisfy (\ref{gf}), (\ref{gp}) and (\ref{gi}), and if $(u_0,u_{0t},\Theta_0)$ complies with (\ref{Init})
and is such that
\bas
	\io |\na u_0|^2 > \Big\{ \frac{4}{a^2 T} + 2T\Big\} \cdot \io \psi(\Theta_0) 
	+ \frac{4 e^{2aT}}{a^2 T} \cdot \Big\{ \gamma \Big(\inf_{x\in\Om} \Theta_0(x) \Big) \Big\}^{-1} 
		\cdot \bigg\{ \frac{3a^2}{2} \io u_0^2 + \io u_{0t}^2 \bigg\},
\eas
where $\psi(\xi):=\int_\xi^\infty \frac{d\sig}{\gamma(\sig)}$, $\xi\ge 0$
(cf.~Lemma \ref{lem64} below).\abs
ii) \ Through the design described above, our approach toward blow-up detection in (\ref{0}) can be viewed as a hybrid reasoning
that joins elements from parabolic blow-up analysis with arguments adapted to the particular structure of the viscous wave
equation in (\ref{0}).
This attempts to adequately take into account the circumstance that (\ref{0}) apparently is not accessible to 
classical nonexistence proofs drawing either explicitly or implicitly on the presence of genuine energies unbounded from below,
as seen in numerous discoveries of explosions in superlinearly forced wave equations
(\cite{levine}, \cite{gazzola}, \cite{alves}), and also in various viscoelastic models for either solid or fluid materials
(\cite{dehua_wang}, \cite{messaoudi}, \cite{boudjeriou}, \cite{nhan}, \cite{song}, \cite{sun}).
\mysection{The role of $\Theta$ near possible singularities. Proof of Proposition \ref{prop_ext}}
\subsection{Local-in-time $L^\infty$ bounds for $u$ and $u_t$}\label{sect2}
To simplify our presentation in this and the next sections, let us note that letting $v:=u_t+au$ transfers
(\ref{0}) into the three-component system
\be{0v}
	\lball
	v_t = \na\cdot (\gamma(\Theta)\na v) + av - a^2 u + \na\cdot f(\Theta),
	\qquad & x\in\Om, \ t>0, \\[1mm]
	u_t = v-au,
	\qquad & x\in\Om, \ t>0, \\[1mm]
	\Theta_t = D\Del\Theta + \gamma(\Theta) |\na v - a\na u|^2 + f(\Theta)\cdot (\na v - a \na u),
	\qquad & x\in\Om, \ t>0, \\[1mm]
	v=0, \quad u=0, \quad \frac{\pa\Theta}{\pa\nu}=0,
	\qquad & x\in\pO, \ t>0, \\[1mm]
	v(x,0)=v_0(x):=u_{0t}(x)+au_0(x), \quad u(x,0)=u_0(x), \quad \Theta(x,0)=\Theta_0(x),
	\qquad & x\in\Om,
	\ear
\ee
and our first objective will consist in showing that if (\ref{ext}) was violated for some solution of (\ref{0}),
then in the accordingly transformed variables, both $v$ and $u$ would belong to $L^\infty(\Om\times (0,\tm))$.
Through a H\"older bound and hence a uniform continuity property 
thereby implied (Lemma \ref{lem11}), the former of these inclusions will play a key role 
in the course of an Ehrling-type interpolation argument on which our proof of Proposition \ref{prop_ext} will
rely (see Lemma \ref{lem_LW}).\abs
To achieve such $L^\infty$ bounds by means of a Moser-type iteration drawing on parabolicity of the first equation in (\ref{0v}),
given $a>0$ we fix any $\kappa>0$ large enough such that
\be{A}
	A:=a+\kappa
	\qquad \mbox{satisfies} \qquad
	A \ge 2a 
	\quad \mbox{and} \quad
	A \ge 2,
\ee
and assuming (\ref{gf}) and (\ref{Init}) we let $\tm$ and $(v,u,\Theta)$ be as above and set
\be{wz}
	w(x,t):=e^{-\kappa t} v(x,t)
	\quad \mbox{and} \quad
	z(x,t):=e^{-\kappa t} u(x,t),
	\qquad x\in\bom, \ t\in [0,\tm),
\ee
noting that then, by (\ref{0v}),
\be{0w}
	\lball
	w_t = \na\cdot (\gamma(\Theta)\na w) - (A-2a) w - a^2 z + e^{-\kappa t} \na \cdot f(\Theta),
	\qquad & x\in\Om, \ t\in (0,\tm), \\[1mm]
	z_t = w-Az,
	\qquad & x\in\Om, \ t\in (0,\tm), \\[1mm]
	w=0,
	\qquad & x\in\pO, \ t\in (0,\tm), \\[1mm]
	w(x,0)=u_{0t}(x)+au_0(x), \quad z(x,0)=z_0(x), 
	\qquad & x\in\Om.
	\ear
\ee
The following observation forms the basis for our recursive argument, using that (\ref{A}) induces sufficiently strong
zero-order dissipation in (\ref{0w}) to create some favorable absorptive contribution to the evolution of
$t\mapsto \io w^p + \io z^p$ for even $p\ge 2$:
\begin{lem}\label{lem91}
  Let $a>0$ and $D>0$, assume (\ref{gf}) and (\ref{Init}), and suppose that $\tm<\infty$, but that
  \be{91.1}
	\limsup_{t\nearrow \tm} \|\Theta(\cdot,t)\|_{L^\infty(\Om)} < \infty.
  \ee
  Then there exists $C>0$ such that for any even integer $p\ge 2$, the functions in (\ref{wz}) satisfy
  \bea{91.2}
	& & \hs{-20mm}
	\frac{d}{dt} \bigg\{ 1 + \io w^p + \io z^p \bigg\}
	+ \frac{1}{C} \cdot \bigg\{ 1 + \io w^p + \io z^p \bigg\}
	+ \frac{1}{C} \io |\na w^\frac{p}{2}|^2 \nn\\
	&\le& C p^2 \io w^p + C p^2
	\qquad \mbox{for all } t\in (0,\tm).
  \eea
\end{lem}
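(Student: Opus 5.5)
Under (\ref{91.1}) and $\tm<\infty$, continuity of $\Theta$ on $\bom\times[0,\tm)$ and $\Theta\ge 0$ give some $M>0$ with $0\le\Theta\le M$ in $\Om\times(0,\tm)$. Hence, by (\ref{gf}), there are $\gamma_1,\gamma_2,K>0$ such that
\[
\gamma_1\le\gamma(\Theta)\le\gamma_2 \quad\mbox{and}\quad |f(\Theta)|\le K \qquad\mbox{in } \Om\times(0,\tm).
\]
The plan is to test the first equation in (\ref{0w}) by $w^{p-1}$ and the second by $z^{p-1}$, and to add the results. Since $p$ is even, $w^p=|w|^p$ and $z^p=|z|^p$. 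Also $w=0$ on $\pO$, so integrations by parts produce no boundary terms.

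\textbf{Testing the $w$-equation.} Multiplying by $p\,w^{p-1}$ and integrating gives
\[
\frac{d}{dt}\io w^p = -p(p-1)\io\gamma(\Theta)w^{p-2}|\na w|^2 - p(A-2a)\io w^p - a^2p\io w^{p-1}z - p(p-1)e^{-\kappa t}\io w^{p-2}f(\Theta)\cdot\na w .
\]
Because $\gamma\ge\gamma_1$, the diffusive term is at most $-\gamma_1 p(p-1)\io w^{p-2}|\na w|^2$. Using $|\na w^{p/2}|^2=\frac{p^2}{4}w^{p-2}|\na w|^2$, this equals $-\frac{4\gamma_1(p-1)}{p}\io|\na w^{p/2}|^2$, and $\frac{4(p-1)}{p}\ge 2$ for $p\ge2$. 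The $f$-term is handled by Young's inequality: it is bounded by $\frac{\gamma_1}{2}p(p-1)\io w^{p-2}|\na w|^2 + \frac{K^2}{2\gamma_1}p(p-1)\io w^{p-2}$. Half of the dissipation survives, and $w^{p-2}\le w^p+1$ turns the remainder into $Cp^2\io w^p + Cp^2$. This is exactly the form allowed on the right-hand side of (\ref{91.2}). The term $-p(A-2a)\io w^p$ is nonpositive by (\ref{A}), and I will simply drop it.

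\textbf{Testing the $z$-equation.} The ODE gives
\[
\frac{d}{dt}\io z^p = p\io w z^{p-1} - Ap\io z^p .
\]
The crucial absorption comes from $-Ap\io z^p$. The two cross terms $p\io wz^{p-1}$ and $a^2p\io w^{p-1}z$ are estimated by Young's inequality:
\[
p\io wz^{p-1}\le \io w^p+(p-1)\io z^p, \qquad a^2p\io w^{p-1}z \le a^2(p-1)\io w^p + a^2\io z^p .
\]
Here one must be careful. We have $Ap\ge 2p$ from (\ref{A}), but $(p-1)+a^2$ might exceed $Ap-\frac{Ap}{4}$ if $a$ is large. I will therefore use a weighted Young split, placing the $z$-parts with small weight so that their total is at most $\frac{Ap}{2}\io z^p$. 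The resulting $w$-parts then carry constants at most $Cp^{\,}$ (possibly $p$-independent powers of $a$, $A$), which are absorbed into $Cp^2\io w^p$. This leaves $-\frac{Ap}{2}\io z^p\le -\io z^p$ on the left.

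\textbf{Assembling (\ref{91.2}).} To produce the term $\frac1C\{1+\io w^p+\io z^p\}$ on the left, I simply add $\io w^p+1$ to both sides. The constant and the $w$-term are then harmless on the right, since they are covered by $Cp^2\io w^p+Cp^2$. The $z$-part is supplied by the absorption just obtained.

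The main obstacle is bookkeeping: verifying that all constants are independent of $p$ except through explicit factors $p^2$, with $C$ depending only on $a,\kappa,\gamma_1,K$. In particular, the Young splits of the cross terms must be arranged so that the $z$-absorption $Ap\io z^p$ is never exceeded uniformly in $p$. If the straightforward weights fail for large $a$, the fallback is to use $A\ge2a$ and choose the $\varepsilon$ in Young's inequality in dependence on $a$ only.
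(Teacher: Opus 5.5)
Your proposal is correct and follows the paper's route: test the $w$-equation by $w^{p-1}$ and the $z$-equation by $z^{p-1}$, absorb both cross terms into $Ap\io z^p$ by Young's inequality, and treat the $f$-term by Young against half of the diffusion. Your observation that $w=0$ on $\pO$ kills all boundary terms is right, and it makes the paper's trace-embedding estimate of $pe^{-\kappa t}\int_{\pO}w^{p-1}f\cdot\nu$ unnecessary, since that integral vanishes identically. When you fix the weights, put the reduced $z$-weight only on the term coming from the $w$-equation, e.g.\ $a^2p|w|^{p-1}|z|\le |z|^p+(p-1)a^2\max\{1,a^2\}|w|^p$, and keep the standard split $p\io wz^{p-1}\le\io w^p+(p-1)\io z^p$. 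Shrinking the latter's $z$-weight to $\theta(p-1)$ with a fixed $\theta<1$ would force a $w$-coefficient $\theta^{-(p-1)}$, exponential in $p$, which breaks the $Cp^2$ structure. Since $(p-1)+1\le\frac{Ap}{2}$ by $A\ge 2$, this choice fits your stated budget, much as the paper's $\frac{Ap}{4}$/$\frac{Ap}{2}$ split does.
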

\proof
  If $\tm<\infty$ but (\ref{91.1}) was valid, then by continuity of $\gamma$ and $f$ and by positivity of $\gamma$, we could find 
  $c_1>0$ and $c_2>0$ such that
  \be{91.3}
	\gamma(\Theta)\ge c_1
	\quad \mbox{and} \quad
	|f(\Theta)| \le c_2
	\qquad \mbox{in } \Om\times (0,\tm),
  \ee
  and for even integers $p\ge 2$, we can integrate by parts in the first equation from (\ref{0w}) to see that, by (\ref{91.3}),
  \bea{91.5}
	\frac{d}{dt} \io w^p
	&=& p \io w^{p-1} \na\cdot\big(\gamma(\Theta)\na w\big)
	- (A-2a) p \io w^p
	- a^2 p \io w^{p-1} z
	+ p e^{-\kappa t} \io w^{p-1} \na\cdot f \nn\\
	&=& -p(p-1) \io \gamma(\Theta) w^{p-2} |\na w|^2
	- (A-2a)p \io w^p
	- a^2 p \io w^{p-1} z \nn\\
	& & - p(p-1) e^{-\kappa t} \io w^{p-2} \na w\cdot f
	+ p e^{-\kappa t} \int_{\pO} w^{p-1} f\cdot\nu \nn\\
	&\le& - c_1 p(p-1) \io w^{p-2} |\na w|^2
	- a^2 p \io w^{p-1} z \nn\\
	& & + c_2 p(p-1) \io w^{p-2} |\na w|
	+ c_2 p \int_{\pO} |w^{p-1}|
	\qquad \mbox{for all } t\in (0,\tm),
  \eea
  because $A-2a\ge 0$ by (\ref{A}).
  Here, Young's inequality shows that
  \bas
	c_2 p(p-1) \io w^{p-2} |\na w| 
	&\le& \frac{c_1 p(p-1)}{2} \io w^{p-2} |\na w|^2
	+ \frac{c_2^2 p(p-1)}{2c_1} \io w^{p-2} \nn\\
	&\le& \frac{c_1 p(p-1)}{2} \io w^{p-2} |\na w|^2
	+ c_3 p^2 \io w^p + c_3 p^2
	\qquad \mbox{for all } t\in (0,\tm)
  \eas
  with $c_3:=\frac{c_2^2}{2c_1}(1+|\Om|)$.
  As furthermore, again by Young's inequality, and due to the rough estimates $(\frac{4}{A})^\frac{1}{p-1} \le 2$
  and $a^\frac{2p}{p-1} \le a^4$ resulting from the inequalities $A\ge 2$ and $p\ge 2$,
  \bas
	- a^2 p \io w^{p-1} z
	&=& - \io \Big\{ \frac{Ap}{4} z^p \Big\}^\frac{1}{p} \cdot \Big(\frac{4}{Ap}\Big\}^\frac{1}{p} a^2 p w^{p-1} \\
	&\le& \frac{Ap}{4} \io z^p + \Big( \frac{4}{A}\Big)^\frac{1}{p-1} a^\frac{2p}{p-1} p \io w^p \\
	&\le& \frac{Ap}{4} \io z^p + c_4 p \io w^p
	\qquad \mbox{for all } t\in (0,\tm)
  \eas
  with $c_4:= 2a^4$, from (\ref{91.5}) we infer that
  \be{91.6}
	\frac{d}{dt} \io w^p
	+ \frac{c_1 p(p-1)}{2} \io w^{p-2} |\na w|^2
	\le (c_3+c_4) p^2 \io w^p
	+ \frac{Ap}{4} \io z^p + c_3 p^2
	\qquad \mbox{for all } t\in (0,\tm),
  \ee
  because $p\le p^2$.
  Apart from that, using the second equation in (\ref{0w}) along with Young's inequality we find that since $A\ge 2$ by (\ref{A}),
  \bas
	\frac{d}{dt} \io z^p
	&=& p \io w z^{p-1} 
	- Ap \io z^p \\
	&=& \io \Big\{ \frac{Ap}{2} z^p \Big\}^\frac{p-1}{p} \cdot \Big(\frac{2}{Ap}\Big)^\frac{p-1}{p} pw
	- Ap \io z^p \\
	&\le& \Big(\frac{2}{A}\Big)^{p-1} \io w^p
	- \frac{Ap}{2} \io z^p \\
	&\le& p^2 \io w^p
	- \frac{Ap}{2} \io z^p 
	\qquad \mbox{for all } t\in (0,\tm),
  \eas
  so that from (\ref{91.6}) it follows that
  \bea{91.55}
	& & \hs{-20mm}
	\frac{d}{dt} \bigg\{ 1  + \io w^p + \io z^p \bigg\}
	+ \frac{Ap}{4} \cdot \bigg\{ 1  + \io w^p + \io z^p \bigg\}
	+ \frac{c_1 p(p-1)}{2} \io w^{p-2} |\na w|^2 \nn\\
	&\le& (c_3+c_4) p^2 \io w^p
	+ p^2 w^p 
	+ \frac{Ap}{4} \io w^p
	+ c_3 p^2
	+ \frac{Ap}{4} \nn\\
	&\le& \Big( c_3 + c_4 + 1 + \frac{A}{4}\Big) p^2 \io w^p
	+ \Big( c_3 + \frac{A}{4}\Big) p^2
	+ c_2 p \int_{\pO} |w^{p-1}|
	\qquad \mbox{for all } t\in (0,\tm).
  \eea
  Here, the boundary integral can be estimated on the basis of a known boundary trace embedding estimate (\cite{alt}),
  according to which we can find $c_5>0$ such that
  \bas
	\int_{\pO} |\rho| \le c_5 \io |\na\rho| + c_5 \io |\rho|
	\qquad \mbox{for all } \rho\in C^1(\bom).
  \eas
  Therefore, due to the Cauchy-Schwarz inequality and Young's inequality we have
  \bas
	c_2 p \int_{\pO} |w^{p-1}|
	&=& c_2 p \|w^{p-1}\|_{L^1(\pO)} \\
	&\le& c_2 p \cdot \bigg\{ (p-1) c_5 \io w^{p-2} |\na w| 
	+ c_5 \io |w|^{p-1} \bigg\} \\
	&\le& c_2 c_5 p(p-1) \cdot \bigg\{ \io w^{p-2} |\na w|^2m\bigg\}^\frac{1}{2} \cdot \bigg\{ \io w^{p-2} \bigg\}^\frac{1}{2}
	+ c_2 c_5 p \io |w|^{p-1} \\
	&\le& \frac{c_1 p(p-1)}{4} \io w^{p-2} |\na w|^2
	+ \frac{c_2^2 c_5^2 p(p-1)}{c_1} \io w^{p-2}
	+ c_2 c_5 p \io |w|^{p-1} \\
	&\le& \frac{c_1 p(p-1)}{4} \io w^{p-2} |\na w|^2
	+ \Big\{ \frac{c_2^2 c_5^2}{c_1} + c_2 c_5 \Big\} \cdot p^2 \cdot \io w^p
	+ c_2 c_5 p^2 |\Om|
  \eas
  for all $t\in (0,\tm)$, again because $p\le p^2$.
  Noting that
  \bas
	\frac{c_1 p(p-1)}{4} \io w^{p-2} |\na w|^2
	= \frac{c_1 (p-1)}{p} \io \big|\na w^\frac{p}{2}\big|^2
	\ge \frac{c_1}{2} \io \big|\na w^\frac{p}{2}\big|^2
	\qquad \mbox{for all } t\in (0,\tm),
  \eas
  from (\ref{91.55}) we thereby obtain (\ref{91.2}).
\qed
A first application concretizes this to the case when $p=2$, and thereby forms the starting point for our iteration:
\begin{lem}\label{lem92}
  If $a>0$ and $D>0$ and (\ref{gf}) as well as (\ref{Init}) hold, and if $\tm<\infty$ but (\ref{91.1}) is satisfied, 
  then there exists $C>0$ such that for $w$ and $z$ as in (\ref{wz}) we have
  \bea{92.1}
	\io w^2(\cdot,t) 
	+ \io z^2(\cdot,t) 
	\le C
	\qquad \mbox{for all } t\in (0,\tm).
  \eea
\end{lem}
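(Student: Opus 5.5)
We apply Lemma \ref{lem91} with $p=2$ and then close the resulting differential inequality with an ODE comparison. The only delicate point is that the right-hand side of (\ref{91.2}) contains $4C\io w^2$ with a constant that may exceed the absorptive coefficient $\frac{1}{C}$ on the left. So the absorption term alone does not control it, and we must use the gradient term together with Poincar\'e's inequality.

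Concretely, with $p=2$ and $y(t):=1+\io w^2+\io z^2$, Lemma \ref{lem91} yields $c_1>0$ such that
\bas
	y'(t)+\frac{1}{c_1}y(t)+\frac{1}{c_1}\io|\na w|^2\le 4c_1\io w^2+4c_1
	\qquad \mbox{for all } t\in(0,\tm).
\eas
The easiest route simply discards the gradient term and the absorptive term. Using $\io w^2\le y$, this gives $y'\le 4c_1 y+4c_1$, and Gr\"onwall's lemma then implies
\bas
	y(t)\le \big(y(0)+1\big)e^{4c_1 t}-1\le \big(y(0)+1\big)e^{4c_1\tm}
	\qquad \mbox{for all } t\in(0,\tm).
\eas
This is finite precisely because $\tm<\infty$ by hypothesis. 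Moreover, $y(0)=1+\io (u_{0t}+au_0)^2+\io u_0^2$ is finite by (\ref{Init}), and $y$ is continuous on $[0,\tm)$ and $C^1$ on $(0,\tm)$ by the regularity in (\ref{tl1}). This yields (\ref{92.1}) directly.

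No real obstacle is expected. Finiteness of $\tm$ makes exponential growth harmless, so neither Poincar\'e's inequality (available since $w=0$ on $\pO$) nor the dissipative term is needed. The only points to check are that $p=2$ is admissible in Lemma \ref{lem91}, and that the differentiation of $\io w^2$ and $\io z^2$ performed there is justified near $t=0$. The latter is handled by integrating the inequality over $(t_0,t)$ and letting $t_0\searrow0$, using continuity of $u$ and $u_t$ on $\bom\times[0,\tm)$.
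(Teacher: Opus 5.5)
Your argument is correct and is exactly the paper's proof: take $p=2$ in Lemma \ref{lem91}, drop the favorably signed terms to obtain a linear differential inequality for $1+\io w^2+\io z^2$, and close it with Gr\"onwall's lemma, using that $\tm<\infty$. The claim in your opening paragraph that Poincar\'e's inequality and the gradient term are needed is superfluous, as you yourself conclude.
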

\proof
  On dropping some favorably signed summands therein, from (\ref{91.2}) we particularly obtain $c_1>0$ such that
  \bas
	\frac{d}{dt} \bigg\{ 1 + \io w^2 + \io z^2 \bigg\}
	\le c_1 \io w^2 + c_1 
	\le c_1 \cdot \bigg\{ 1 + \io w^2 + \io z^2 \bigg\}
	\qquad \mbox{for all } t\in (0,\tm).
  \eas
  As $\tm$ is assumed to be finite, in view of Gronwall's lemma this entails (\ref{92.1}).
\qed
Next applying Lemma \ref{lem91} to exponentially growing $p$ in Moser's style, we can accomplish the main step of this Section:
\begin{lem}\label{lem93}
  Let $a>0$ and $D>0$ and (\ref{gf}) as well as (\ref{Init}) be satisfied, and suppose that $\tm<\infty$ but that (\ref{91.1}) holds.
  Then one can find $C>0$ such that the functions defined in (\ref{wz}) have the property that
  \bea{93.1}
	\|w(\cdot,t)\|_{L^\infty(\Om)}
	+ \|z(\cdot,t)\|_{L^\infty(\Om)}
	\le C
	\qquad \mbox{for all } t\in (0,\tm).
  \eea
\end{lem}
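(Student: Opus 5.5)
We will run a Moser-type iteration on the differential inequality (\ref{91.2}) from Lemma \ref{lem91}, with exponents $p_k:=2^{k+1}$, $k\ge 0$, and starting from the $L^2$ bound supplied by Lemma \ref{lem92}. Writing
\[
	M_k:=\sup_{t\in(0,\tm)}\Big\{1+\io w^{p_k}(\cdot,t)+\io z^{p_k}(\cdot,t)\Big\},
\]
Lemma \ref{lem92} gives $M_0<\infty$. The goal is a recursion of the form $M_k\le \max\{b^k M_{k-1}^2,\ M^{(0)}_k\}$ with some $b>1$, where $M^{(0)}_k:=1+\io w_0^{p_k}+\io z_0^{p_k}\le c^{p_k}$ is controlled by $\|w(\cdot,0)\|_{L^\infty(\Om)}+\|z(\cdot,0)\|_{L^\infty(\Om)}$, which is finite by (\ref{Init}). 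Iterating then yields $M_k^{1/p_k}\le C$ uniformly in $k$, and letting $k\to\infty$ gives (\ref{93.1}). Since the bound for $M_k$ holds for each fixed $t$ and the constants are independent of $t$, the $L^\infty$ bound holds for all $t\in(0,\tm)$.

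The key step is to absorb the bad term $Cp^2\io w^p$ on the right of (\ref{91.2}). With $\rho:=w^{p/2}$ (so $\io w^p=\io\rho^2$ and $\io w^{p/2}=\io|\rho|$), we use the Gagliardo--Nirenberg inequality $\|\rho\|_{L^2(\Om)}^2\le c_{GN}\|\na\rho\|_{L^2(\Om)}^{2\theta}\|\rho\|_{L^1(\Om)}^{2(1-\theta)}+c_{GN}\|\rho\|_{L^1(\Om)}^2$, where $\theta=\frac{n}{n+2}\in(0,1)$. The lower-order term here actually needs no boundary condition, although $w=0$ on $\pO$ would allow a Poincaré-type version anyway. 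Young's inequality then gives
\[
	Cp^2\io w^p\le \frac{1}{2C}\io|\na w^{p/2}|^2+c\,p^{\lambda}\Big(\io |w|^{p/2}\Big)^2
\]
with $\lambda=\frac{2}{1-\theta}=n+2$, a fixed power of $p$. Since $p/2=p_{k-1}$ is even, $\io|w|^{p/2}=\io w^{p_{k-1}}\le M_{k-1}$.

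Inserting this into (\ref{91.2}) gives $y'+\frac1C y\le c\,p_k^{\lambda}M_{k-1}^2+Cp_k^2$ for $y(t):=1+\io w^{p_k}+\io z^{p_k}$. An ODE comparison then yields
\[
	y(t)\le\max\big\{y(0),\ C(cp_k^\lambda M_{k-1}^2+Cp_k^2)\big\}\le \max\{y(0),\ b^kM_{k-1}^2\}
\]
for suitable $b$, since $M_{k-1}\ge1$. It is crucial that the absorptive term $\frac1C y$ in (\ref{91.2}) makes this bound time-independent, although finiteness of $\tm$ would also permit a Gronwall argument. Finally, the recursion is unrolled in the standard way: $\log M_k\le 2\log M_{k-1}+k\log b+c\,2^k$, so $2^{-k}\log M_k$ stays bounded because $\sum k2^{-k}<\infty$. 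This gives $\|w\|_{L^{p_k}}+\|z\|_{L^{p_k}}\le M_k^{1/p_k}\le C$.

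The main obstacle I expect is bookkeeping rather than concept. One must check that the Gagliardo--Nirenberg constant is independent of $p$, which it is because it is applied to $\rho$ for a fixed pair of exponents. One must also check that only polynomial growth $p^\lambda$ enters, that the $z$-component is controlled without any gradient term (it is, because $z$ sits inside $y$ and is damped by the $\frac1C y$ term), and that the initial-data term is handled via $(1+|\Om|)\,c^{p_k}$.
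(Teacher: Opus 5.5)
Your route is the paper's proof almost step for step. You use the same exponents $p_k=2^{k+1}$ and the Gagliardo--Nirenberg inequality with exponent $\frac{n}{n+2}$ applied to $w^{p_k/2}$. Your version carries an additive $\|\rho\|_{L^1(\Om)}^2$ term instead of the paper's homogeneous one for functions vanishing on $\pO$, which is a harmless variant. Young's inequality then produces $p_k^{n+2}M_{k-1}^2$, an ODE comparison exploits the absorptive term in (\ref{91.2}), and you arrive at the recursion $M_k\le\max\{c^{p_k},\,b^kM_{k-1}^2\}$. Up to that point everything is correct.

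The step that fails as written is the final unrolling. You replace the maximum by a sum, $\log M_k\le 2\log M_{k-1}+k\log b+c\,2^k$. Writing $a_k:=2^{-k}\log M_k$, this only gives $a_k\le a_{k-1}+k2^{-k}\log b+c$. The constant increment $c$ accumulates: a sequence satisfying this with equality has $a_k\ge a_0+ck$, i.e.\ $M_k^{1/p_k}=e^{a_k/2}$ growing like $e^{ck/2}$. So this inequality cannot yield a $k$-independent bound, and $\sum_j j2^{-j}<\infty$ does not rescue it.

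The fix is to keep the maximum. Taking logarithms in $M_k\le\max\{c^{2^{k+1}},\,b^kM_{k-1}^2\}$ and dividing by $2^k$ gives $a_k\le\max\{2\log c,\ a_{k-1}+k2^{-k}\log b\}$. Induction then yields $a_k\le\max\{2\log c,\,\log M_0\}+\log b\sum_{j\ge1}j2^{-j}$ for all $k$, i.e.\ $M_k^{1/2^k}\le b^2\max\{c^2,M_0\}$. This is exactly the elementary recursion the paper takes from \cite[Lemma 2.1]{ding_win1} to obtain $N_k^{1/2^k}\le c_7^2\max\{c_6,N_0\}$. With that correction your argument is complete.
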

\proof
  In order to estimate the expressions
  \be{93.02}
	N_k:=\sup_{t\in (0,\tm)} \bigg\{ 1 + \io w^{p_k}(\cdot,t) + \io z^{p_k}(\cdot,t) \bigg\},	
	\qquad k\in\N=\{0,1,2,...\},
  \ee
  with $p_k:=2^{k+1}$, $k\in\N$, we first note that $N_0$ is finite according to Lemma \ref{lem92},
  and in line with a Gagliardo-Nirenberg inequality we fix $c_1>0$ in such a way that
  \be{93.2}
	\|\vp\|_{L^2(\Om)}^2
	\le c_1 \|\na\vp\|_{L^2(\Om)}^\frac{2n}{n+2} \|\vp\|_{L^1(\Om)}^\frac{4}{n+2}
	\qquad \mbox{for all $\vp\in C^1(\bom)$ such that $\vp=0$ on $\pO$.}
  \ee
  We next employ Lemma \ref{lem91} to find $c_2>0$ and $c_3>0$ such that for any choice of $k\ge 1$, the function $y_k$ defined by
  letting
  \bas
	y_k(t):=1+ \io w^{p_k}(\cdot,t) + \io z^{p_k}(\cdot,t),
	\qquad t\in [0,\tm),
  \eas
  satisfies
  \be{93.3}
	y_k'(t) + c_2 y_k(t) + c_2 \io \big|\na w^\frac{p_k}{2}\big|^2
	\le c_3 p_k^2 \io w^{p_k} + c_3 p_k^2
	\qquad \mbox{for all } t\in (0,\tm),
  \ee 
  where by (\ref{93.2}), (\ref{93.02}) and Young's inequality,
  \bas
	c_3 p_k^2 \io w^{p_k}
	&\le& c_1 c_3 p_k^2 \big\|\na w^\frac{p_k}{2}\big\|_{L^2(\Om)}^\frac{2n}{n+2} \|w^\frac{p_k}{2}\|_{L^1(\Om)}^\frac{4}{n+2} \\
	&\le& c_1 c_3 p_k^2 N_{k-1}^\frac{4}{n+1} \big\|\na w^\frac{p_k}{2}\big\|_{L^2(\Om)}^\frac{2n}{n+2} \\
	&=& \bigg\{ c_2 \io \big| \na w^\frac{p_k}{2}\big|^2 \bigg\}^\frac{n}{n+2} 
		\cdot c_1 c_2^{-\frac{n}{n+2}} c_3 p_k^2 N_{k-1}^\frac{4}{n+2} \\
	&\le& c_2 \io \big|\na w^\frac{p_k}{2}\big|^2
	+ c_4 p_k^{n+2} N_{k-1}^2
	\qquad \mbox{for all } t\in (0,\tm)
  \eas
  with $c_4:=(c_1 c_2^{-\frac{n}{n+2}} c_3)^\frac{n+2}{2}$.
  As $N_{k-1} \ge 1$ for each $k\ge 1$, from (\ref{93.3}) we thus obtain that if we let $c_5:=c_3+c_4$, then
  \bas
	y_k'(t) + c_2 y_k(t) \le c_5 p_k^{n+2} N_{k-1}^2
	\qquad \mbox{for all $t\in (0,\tm)$ and } k\ge 1,
  \eas
  and that thus, by an ODE comparison,
  \bas
	y_k(t) \le \max \Big\{ y_k(0) \, , \, \frac{c_5}{c_2} p_k^{n+2} N_{k-1}^2 \Big\}
	\qquad \mbox{for all $t\in (0,\tm)$ and } k\ge 1.
  \eas
  By an inductive argument, in line with (\ref{93.02}) this firstly ensures that $N_k<\infty$ also for all $k\ge 1$, and that,
  secondly,
  \be{93.4}
	N_k \le \max \Big\{ c_6^{2^k} \, , \, c_7^k N_{k-1}^2 \Big\}
	\qquad \mbox{for all } k\ge 1
  \ee
  with $c_6:=(|\Om|+1) \cdot \big( 1 + \|u_{0t}+au_0\|_{L^\infty(\Om)} + \|u_0\|_{L^\infty(\Om)}\big)$
  and $c_7:=(1+\frac{c_5}{c_2}) \cdot 2^{n+2}$, because by (\ref{0w}),
  \bas
	y_k(0)
	&=& 1 + \io (u_{0t} + au_0)^{p_k} + \io u_0^{p_k} \\
	&\le& 1 + |\Om| \cdot \|u_{0t} + au_0\|_{L^\infty(\Om)}^{p_k} + |\Om| \cdot \|u_0\|_{L^\infty(\Om)}^{p_k} \\
	&\le& \big(|\Om|+1\big)^{2^k} \cdot \Big\{ 1 + \|u_{0t} + au_0\|_{L^\infty(\Om)}^{2^k} + \|u_0\|_{L^\infty(\Om)}^{2^k}
		\Big\} \\
	&\le& c_6^{2^k}
	\qquad \mbox{for all } k\ge 1
  \eas
  and
  \bas
	\frac{c_5}{c_2} p_k^{n+2}
	\le\Big(1+\frac{c_5}{c_2}\Big)^k \cdot 2^{(n+2)k}
	\le c_7^k
	\qquad \mbox{for all } k\ge 1.
  \eas
  According to the outcome of an elementary recursion documented in \cite[Lemma 2.1]{ding_win1}, however, (\ref{93.4}) entails that
  \bas
	N_k^\frac{1}{2^k} \le c_7^2 \cdot \max \big\{ c_6 \, , \, N_0\big\}
	\qquad \mbox{for all } k\ge 1,
  \eas
  from which (\ref{93.1}) readily follows.
\qed
Within finite time intervals, drawing a corresponding conclusion for $(v,u)$ simply amounts to controlling
an exponentially growing and hence bounded factor:
\begin{lem}\label{lem94}
  Let $a>0$ and $D>0$, assume (\ref{gf}) and (\ref{Init}), and suppose that $\tm<\infty$ but that (\ref{91.1}) holds.
  Then there exists $C>0$ such that
  \bea{94.1}
	\|v(\cdot,t)\|_{L^\infty(\Om)}
	+ \|u(\cdot,t)\|_{L^\infty(\Om)}
	\le C
	\qquad \mbox{for all } t\in (0,\tm).
  \eea
\end{lem}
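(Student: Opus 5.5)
This follows directly from Lemma \ref{lem93} together with the definitions in (\ref{wz}). By (\ref{wz}) we have $v(x,t)=e^{\kappa t}w(x,t)$ and $u(x,t)=e^{\kappa t}z(x,t)$ for all $x\in\bom$ and $t\in[0,\tm)$, with $\kappa>0$ the number fixed in (\ref{A}). Because $\tm<\infty$ is part of the hypothesis, the factor satisfies $e^{\kappa t}\le e^{\kappa\tm}$ for every $t\in(0,\tm)$, so it is bounded.

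The plan is as follows. First, we note that the hypotheses of Lemma \ref{lem94} coincide with those of Lemma \ref{lem93}: namely $a>0$, $D>0$, (\ref{gf}), (\ref{Init}), finiteness of $\tm$, and (\ref{91.1}). Lemma \ref{lem93} therefore provides $c_1>0$ such that $\|w(\cdot,t)\|_{L^\infty(\Om)}+\|z(\cdot,t)\|_{L^\infty(\Om)}\le c_1$ for all $t\in(0,\tm)$. Second, we multiply this bound by $e^{\kappa t}$, which gives
\bas
	\|v(\cdot,t)\|_{L^\infty(\Om)}+\|u(\cdot,t)\|_{L^\infty(\Om)}
	= e^{\kappa t}\Big\{\|w(\cdot,t)\|_{L^\infty(\Om)}+\|z(\cdot,t)\|_{L^\infty(\Om)}\Big\}
	\le c_1 e^{\kappa\tm}
\eas
for all $t\in(0,\tm)$. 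This is (\ref{94.1}) with $C:=c_1e^{\kappa\tm}$.

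There is no real obstacle here. The only point to check is that $\kappa$ depends on $a$ alone through (\ref{A}), and that $\tm$ is a fixed finite number by assumption, so $C$ is a genuine constant that does not depend on $t$.
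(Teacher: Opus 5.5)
Your proposal is correct and matches the paper's proof, which likewise just notes that, by (\ref{wz}), the bound follows from Lemma \ref{lem93}. The factor $e^{\kappa t}\le e^{\kappa\tm}$ is bounded because $\tm<\infty$, exactly as you argue.
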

\proof
  In view of (\ref{wz}), this is a direct consequence of Lemma \ref{lem93}.
\qed
Now again thanks to the parabolic character of the first equation in (\ref{0v}), we may utilize 
standard regularity theory to turn the $L^\infty$ bounds thus obtained into H\"older estimates for $v$.
\begin{lem}\label{lem11}
  Assume that $a>0$ and $D>0$, that (\ref{gf}) and (\ref{Init}) hold, that $\tm<\infty$ and that (\ref{91.1}) holds.
  Then there exist $\al\in (0,1)$ and $C>0$ such that
  \be{11.2}
	\|v(\cdot,t)\|_{C^\al(\bom)} \le C
	\qquad \mbox{for all } t\in (0,\tm).
  \ee
\end{lem}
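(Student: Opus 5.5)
We treat the first equation in (\ref{0v}) as a linear parabolic equation in divergence form for $v$,
\[
	v_t = \na\cdot\big(\gamma(\Theta)\na v + f(\Theta)\big) + g,
	\qquad g:=av-a^2u,
\]
with homogeneous Dirichlet data on $\pO$, and we regard $\Theta$ as a given coefficient function. We then apply the classical H\"older regularity theory for such equations, i.e.\ the De Giorgi--Nash--Moser type results in the version of Ladyzhenskaya--Solonnikov--Ural'tseva or Porzio--Vespri. These give global H\"older continuity up to the boundary and up to $t=0$ when the initial data are H\"older continuous.

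The first step is to verify the structural hypotheses. By (\ref{91.1}) there is $c_1>0$ with $0\le\Theta\le c_1$ in $\Om\times(0,\tm)$. Continuity and positivity of $\gamma$ together with continuity of $f$ then give constants $c_2,c_3,c_4>0$ such that $c_2\le\gamma(\Theta)\le c_3$ and $|f(\Theta)|\le c_4$ there. This yields uniform ellipticity with bounded measurable coefficients. The flux perturbation $f(\Theta)$ lies in $L^\infty$, and hence in $L^q$ for every $q>n+2$, which is what the theory requires. Lemma \ref{lem94} shows that $g=av-a^2u$ is bounded in $L^\infty(\Om\times(0,\tm))$ and that $v$ itself is bounded. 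The initial datum $v_0=u_{0t}+au_0$ belongs to $C^1(\bom)$ by (\ref{Init}), and in particular to $C^{\beta}(\bom)$ for some $\beta\in(0,1)$. The boundary datum is $0$ on the smooth boundary $\pO$, so the uniform exterior cone or sphere condition needed for boundary regularity is satisfied.

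The second step is to invoke the theorem, which yields $\al\in(0,1)$ and $C>0$ such that $\|v\|_{C^{\al,\al/2}(\bom\times[0,\tm))}\le C$. Here $\al$ and $C$ depend only on $n$, $\Om$, $c_2$, $c_3$, on the $L^\infty$ bounds for $f(\Theta)$, $g$ and $v$, and on $\|v_0\|_{C^\beta}$. Because $\tm<\infty$, there is no difficulty with uniformity in time: we may apply the theorem on the single cylinder $\Om\times(0,\tm)$, or on $\Om\times(0,T)$ for every $T<\tm$ with constants independent of $T$. Restricting the space--time bound to time slices gives (\ref{11.2}).

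The main obstacle lies in the regularity of the solution near $t=0$ and at the boundary, since the theorems are usually stated for weak solutions in energy classes. We therefore check that $v$ is such a weak solution. This follows from (\ref{tl1}), which gives $v=u_t+au\in C^{1,0}(\bom\times[0,\tm))$, so that $v\in L^\infty((0,T);L^2)\cap L^2((0,T);W^{1,2}_0)$ for every $T<\tm$. In the weak formulation, the divergence term $\na\cdot f(\Theta)$ must be kept in flux form and not expanded as $f'(\Theta)\cdot\na\Theta$. Expanding it would require control of $\na\Theta$, which we do not have at this stage. Keeping it in flux form is precisely why the theory with $L^\infty$ flux perturbations, rather than merely $L^\infty$ source terms, is needed.
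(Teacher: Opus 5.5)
Your proposal is correct and follows the paper's own proof. The paper likewise rewrites the first equation of (\ref{0v}) as $v_t=\nabla\cdot(B_1\nabla v)+\nabla\cdot B_2+B_3$, with $B_1=\gamma(\Theta)$, $B_2=f(\Theta)$ kept in flux form, and $B_3=av-a^2u$; it obtains $c_1\le B_1\le c_2$, $|B_2|\le c_3$ and boundedness of $B_3$ from (\ref{91.1}) and Lemma \ref{lem94}, and then cites Porzio--Vespri to get $v\in C^{\alpha,\frac{\alpha}{2}}(\overline{\Omega}\times[0,T_{max}])$, whereas your extra checks (H\"older initial data $v_0=u_{0t}+au_0$ vanishing on $\partial\Omega$, and $v$ lying in the energy class) are details the paper leaves implicit.
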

\proof
  We write $B_1(x,t):=\gamma\big(\Theta(x,t)\big)$, $B_2(x,t):=f(\Theta(x,t))$ and $B_3(x,t):=av(x,t)-a^2 u(x,t)$ 
  for $(x,t)\in\Om\times (0,\tm)$, and
  may then re-spell the first equation in (\ref{0v}) according to
  \bas
	v_t = \na \cdot \big(B_1(x,t)\na v\big) + \na \cdot B_2(x,t) + B_3(x,t),
	\qquad x\in\Om, \ t\in (0,\tm).
  \eas
  here, again thanks to the continuity of $\gamma$ and $f$ on $[0,\infty)$ and due to the positivity of $\gamma$ on $[0,\infty)$, 
  (\ref{91.1}) ensures the existence of
  $c_1>0$, $c_2>0$ and $c_3>0$ such that $c_1\le B_1(x,t)\le c_2$ and $|B_2(x,t)| \le c_3$ for all $(x,t)\in\Om\times (0,\tm)$,
  and since Lemma \ref{lem94} warrants boundedness of $B_3$ in $\Om\times (0,\tm)$, 
  a standard result on parabolic H\"older regularity (\cite{PV}) yields $\al\in (0,1)$ such that
  $v\in C^{\al,\frac{\al}{2}}(\bom\times [0,\tm])$.
  This entails (\ref{11.2}) as a particular consequence.
\qed
\subsection{First-oder testing procedures. A refined extensibility criterion}\label{sect3}
According to (\ref{Ext}), verifying Proposition \ref{prop_ext} has now been reduced to making sure that
if $\tm$ was finite and $\Theta$ was bounded, then both $\na v$ and $\na u$ should lie in $L^\infty((0,\tm);L^p(\Om;\R^n))$
with some $p\ge 2$ such that $p>n$. 
This will be achieved by tracking the functional
\be{04}
	\io |\na v|^p + \io |\na u|^{p+2} + \io |\na\Theta|^p
\ee
for such $p$, and it will turn out that corresponding nonlinear influences on its evolution can be controlled
by two independent interpolation arguments which with regard to $\Theta$ only use the presupposed $L^\infty$ bounds
on their lower end, while the slightly higher regularity information on $v$ provided by Lemma \ref{lem11} facilitates
some Ehrling-type property that will play a major role in our derivation of Proposition \ref{prop_ext}.\abs
Specifically, the first of these interpolation inequalities is essentially of standard Gagliardo-Nirenberg type,
interestingly involving an explicit constant independent of domain properties.
\begin{lem}\label{lem2}
  If $\vp\in C^2(\bom)$ satisfies $\vp=0$ on $\pO$, then
  \be{2.1}
	\io|\na\vp|^{p+2} 
	\le (p+\sqrt{n})^2 \|\vp\|_{L^\infty(\Om)}^2 \io |\na\vp|^{p-2} |D^2\vp|^2.
  \ee
\end{lem}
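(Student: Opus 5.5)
We integrate by parts once, writing $|\na\vp|^{p+2}=|\na\vp|^p\,\na\vp\cdot\na\vp$ and moving one derivative off the last factor:
\bas
	\io |\na\vp|^{p+2}
	= - \io \vp\,\na\cdot\big(|\na\vp|^p\na\vp\big),
\eas
where the boundary term $\int_{\pO}\vp|\na\vp|^p\frac{\pa\vp}{\pa\nu}$ vanishes because $\vp=0$ on $\pO$. For $p\ge 2$ the field $|\na\vp|^p\na\vp$ is $C^1$, so this step is justified; for smaller $p$ one would work with $(|\na\vp|^2+\varepsilon)^{p/2}$ and let $\varepsilon\searrow 0$.

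Next we expand the divergence:
\bas
	\na\cdot\big(|\na\vp|^p\na\vp\big)
	= |\na\vp|^p\Del\vp + p|\na\vp|^{p-2}\,\na\vp\cdot D^2\vp\,\na\vp .
\eas
We use two pointwise bounds. First, $|\Del\vp|\le\sqrt{n}\,|D^2\vp|$ by Cauchy--Schwarz on the diagonal entries, where $|D^2\vp|$ is the Frobenius norm. Second, $|\na\vp\cdot D^2\vp\na\vp|\le|\na\vp|^2|D^2\vp|$, since the operator norm is at most the Frobenius norm. Together they give
\bas
	\big|\na\cdot(|\na\vp|^p\na\vp)\big|\le (p+\sqrt n)\,|\na\vp|^p|D^2\vp| .
\eas

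Then we estimate $|\vp|\le\|\vp\|_{L^\infty(\Om)}$ and apply Cauchy--Schwarz, splitting $|\na\vp|^p=|\na\vp|^{\frac{p+2}{2}}\cdot|\na\vp|^{\frac{p-2}{2}}$:
\bas
	\io|\na\vp|^{p+2}\le (p+\sqrt n)\|\vp\|_{L^\infty(\Om)}\Big\{\io|\na\vp|^{p+2}\Big\}^{\frac12}\Big\{\io|\na\vp|^{p-2}|D^2\vp|^2\Big\}^{\frac12}.
\eas
If $\io|\na\vp|^{p+2}=0$ there is nothing to prove. Otherwise we divide by its square root and square both sides, which yields (\ref{2.1}) with exactly the constant $(p+\sqrt n)^2$ and no dependence on $\Om$.

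The only delicate point is the regularity of $|\na\vp|^p\na\vp$ where $\na\vp$ vanishes. The divergence expansion holds classically wherever $\na\vp\neq 0$. For $p\ge2$ the field is $C^1$ everywhere, and the expansion extends to the zero set of $\na\vp$ by continuity. In the remaining range of $p$ the $\varepsilon$-regularization above handles it; the constant is unaffected because the prefactor $\big(|\na\vp|^2+\varepsilon\big)^{\frac{p-2}{2}}|\na\vp|^2$ is monotone in $\varepsilon$, so passing to the limit does not change the bound.
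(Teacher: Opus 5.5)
Your proposal is correct and follows essentially the same route as the paper. The paper likewise integrates by parts against $\vp$, bounds the divergence by $(p+\sqrt{n})|\na\vp|^p|D^2\vp|$ using $|\Del\vp|\le\sqrt{n}|D^2\vp|$, and concludes with Cauchy--Schwarz.
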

\proof
  Since $|\Del\vp|\le \sqrt{n}|D^2\vp|$, integrating by parts and using the Cauchy-Schwarz inequality shows that
  \bas
	\io |\na \vp|^{p+2}
	&=& - \io \vp \cdot \Big\{ p|\na\vp|^{p-2} \na\vp\cdot (D^2\vp\cdot\na\vp) + |\na\vp|^p \Del\vp \Big\} \\
	&\le& (p+\sqrt{n}) \|\vp\|_{L^\infty(\Om)} \io |\na\vp|^p |D^2\vp| \\
	&\le& (p+\sqrt{n}) \|\vp\|_{L^\infty(\Om)} \cdot \bigg\{ \io |\na\vp|^{p+2}\bigg\}^\frac{1}{2} \cdot
		\bigg\{ \io |\na\vp|^{p-2} |D^2\vp|^2 \bigg\}^\frac{1}{2},
  \eas
  from which (\ref{2.1}) follows.
\qed
The second inequality differs from the above one mainly due to the appearance of an arbitrarily small factor
carried by the second-order expression on the right; for a proof, we may refer to \cite[Lemma 10.1]{taowin_PLMS}:
\begin{lem}\label{lem_LW}
  Let $\omega:(0,\infty)\to (0,\infty)$
  be nondecreasing. Then given any $p\ge 2$ and $\eta>0$, one can find $C(p,\eta)>0$ with the property that for each
  $\vp\in C^2(\bom)$ fulfilling $\frac{\pa\vp}{\pa\nu}=0$ on $\pO$ and
  \bas
	|\vp(x)-\vp(y)| < \del
	\qquad \mbox{whenever $x,y\in \bom$ are such that $ |x-y|<\omega(\del)$,}
  \eas
  the inequality
  \bas
	\io |\na\vp|^{p+2} 
	\le \eta \io |\na\vp|^{p-2} |D^2\vp|^2
	+ C(p,\eta) \|\vp\|_{L^\infty(\Om)}^{p+2}
  \eas
  holds.
\end{lem}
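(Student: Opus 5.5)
The plan is a localized version of the integration-by-parts argument from Lemma \ref{lem2}. On small balls, the uniform continuity hypothesis makes $\vp$ nearly constant. Then $\vp$ minus a local constant plays the role of $\vp$ in (\ref{2.1}), but with a small $L^\infty$ norm $\del$ in place of $\|\vp\|_{L^\infty(\Om)}$. The price is cut-off errors, which are bounded in terms of $\|\vp\|_{L^\infty(\Om)}^{p+2}$ and a constant depending on the ball radius. I read the hypothesis as holding for every $\del>0$. Given $\eta>0$, I would fix $\del=\del(p,\eta)>0$ small (to be chosen below) and set $r:=\omega(\del)$.

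\textbf{Step 1 (covering).} Cover $\bom$ by finitely many balls $B_{r/2}(x_i)$, $i=1,\dots,N$, with $x_i\in\bom$. The number $N$ and an overlap constant $K$ (the maximal number of the balls $B_r(x_i)$ containing a given point) depend only on $\Om,n,r$; in fact $K$ depends only on $n$ if the centres are chosen $r/2$-separated. Choose $\zeta_i\in C_0^\infty(B_r(x_i))$ with $0\le\zeta_i\le 1$, $\zeta_i\equiv 1$ on $B_{r/2}(x_i)$ and $|\na\zeta_i|\le 4/r$. Then $\sum_i \zeta_i^{p+2}\ge 1$ on $\Om$. On $\Om\cap B_r(x_i)$, the hypothesis gives $|\vp-c_i|<\del$ with $c_i:=\vp(x_i)$, and always $|\vp-c_i|\le 2\|\vp\|_{L^\infty(\Om)}$.

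\textbf{Step 2 (local estimate).} For each $i$, I would write $|\na\vp|^{p+2}=\na(\vp-c_i)\cdot|\na\vp|^p\na\vp$ and integrate by parts against the weight $\zeta_i^{p+2}$. The boundary integral over $\pO$ contains $|\na\vp|^p\,\frac{\pa\vp}{\pa\nu}$, so it vanishes by the Neumann condition. The boundary of the ball contributes nothing, since $\zeta_i$ is compactly supported there. As in the proof of Lemma \ref{lem2}, using $|\Del\vp|\le\sqrt n|D^2\vp|$, this gives
\bas
	\io \zeta_i^{p+2}|\na\vp|^{p+2}
	\le (p+\sqrt n)\,\del \io \zeta_i^{p+2}|\na\vp|^p|D^2\vp|
	+ (p+2)\io \zeta_i^{p+1}|\na\zeta_i|\,|\vp-c_i|\,|\na\vp|^{p+1}.
\eas
Cauchy--Schwarz and Young's inequality bound the first term on the right by
\[
\frac14\io\zeta_i^{p+2}|\na\vp|^{p+2}+(p+\sqrt n)^2\del^2\io\zeta_i^{p+2}|\na\vp|^{p-2}|D^2\vp|^2 .
\]
For the second term, Young's inequality with exponents $\frac{p+2}{p+1}$ and $p+2$, applied to $\zeta_i^{p+1}|\na\vp|^{p+1}$ and $|\na\zeta_i||\vp-c_i|$, gives the bound
\[
\frac14\io\zeta_i^{p+2}|\na\vp|^{p+2}+c(p)\,r^{-(p+2)}|\Om|\,(2\|\vp\|_{L^\infty(\Om)})^{p+2}.
\]
The point of using the power $p+2$ on the cut-off is exactly that this Young step leaves no negative powers of $\zeta_i$. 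Absorbing the two quarter terms into the left-hand side then yields a local inequality.

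\textbf{Step 3 (summation).} I would sum over $i$, using $\sum_i\zeta_i^{p+2}\ge1$ on the left and $\sum_i\zeta_i^{p+2}\le K$ on the right. This gives
\[
\io|\na\vp|^{p+2}\le 2K(p+\sqrt n)^2\del^2\io|\na\vp|^{p-2}|D^2\vp|^2+C\|\vp\|_{L^\infty(\Om)}^{p+2},
\]
where $C$ depends on $p$, $N$, $r$ and $|\Om|$, hence only on $p$, $\eta$, $\omega$ and $\Om$. Choosing $\del$ so that $2K(p+\sqrt n)^2\del^2\le\eta$ concludes the argument.

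The main obstacle I expect is the handling of the cut-off term in Step 2. It must be controlled by $\|\vp\|_{L^\infty(\Om)}^{p+2}$ rather than by $\del$ times a constant, since the latter would not be of the required form. This is why the crude bound $|\vp-c_i|\le2\|\vp\|_{L^\infty(\Om)}$ is used there, and the smallness $\del$ is used only in the second-order term. A second point needing care is that the integration by parts is carried out on $\Om\cap B_r(x_i)$ for balls meeting $\pO$; this is harmless precisely because of the condition $\frac{\pa\vp}{\pa\nu}=0$ and the smoothness of $\pO$.
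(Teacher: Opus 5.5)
Your proof is correct, and it takes a different route from the paper only in the sense that the paper gives no argument at all: it defers to \cite[Lemma 10.1]{taowin_PLMS}. You supply a self-contained derivation.

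\begin{itemize}
\item You localize the integration by parts of Lemma \ref{lem2} to balls of radius $r=\omega(\delta)$.
\item Subtracting the local constant $\vp(x_i)$ makes the oscillation $\delta$, rather than $\|\vp\|_{L^\infty(\Om)}$, multiply the second-order term.
\item The cut-offs cost only $r^{-(p+2)}\|\vp\|_{L^\infty(\Om)}^{p+2}$. The weight $\zeta_i^{p+2}$ is what lets the Young step close without negative powers of $\zeta_i$.
\end{itemize}

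The one delicate point is an apparent circularity: $\delta$ must be chosen in terms of the overlap number $K$, while the covering depends on $r=\omega(\delta)$. Your remark that a maximal $\frac{r}{2}$-separated family of centres gives $K\le 5^n$, independently of $r$, resolves this correctly. Only $N$ and the cut-off constants depend on $r$, and they enter solely the $\|\vp\|_{L^\infty(\Om)}^{p+2}$ term.

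The citation buys brevity. Your version buys transparency:
\begin{itemize}
\item It shows that $C(p,\eta)$ also depends on $\omega$, $n$ and $\Om$.
\item It shows that monotonicity of $\omega$ is never used.
\item It shows that the Neumann condition serves only to annihilate $\int_{\pO}\zeta_i^{p+2}(\vp-c_i)|\na\vp|^p\frac{\pa\vp}{\pa\nu}$.
\end{itemize}

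Consequently, taking $r=\frac12\omega(\delta)$ and $c_i:=0$ on the balls that meet $\pO$, the same proof gives the analogous inequality for functions with $\vp=0$ on $\pO$. That is precisely the situation in which the paper invokes the lemma. In the proof of Proposition \ref{prop_ext} it is applied to $v$, which satisfies the Dirichlet condition $v=0$ on $\pO$ by (\ref{0v}), not a Neumann condition.
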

The further preparations will be used to control boundary integrals influencing the evolution of the functional in (\ref{04}).
The first auxiliary statement in this regard recalls two known one-sided pointwise estimates for boundary derivatives
of gradients of functions which at zero-order level satisfy either homogeneous Dirichlet or Neumann boundary conditions.
Proofs can be found in \cite[Lemma 3.4]{black_win} and \cite[Lemma 4.2]{mizoguchi_souplet}.
\begin{lem}\label{lem71}
  There exists $C>0$ such that if
  $\vp\in C^2(\bom)$ is such that $\vp=0$ on $\pO$,
  \bas
	\frac{\partial |\na\vp|^2}{\pa \nu}
	\le 2\frac{\pa \vp}{\pa\nu} \Del \vp
	+ C \Big| \frac{\pa\vp}{\pa\nu}\Big|^2
	\qquad \mbox{on } \pO,
  \eas
  and that whenever $\vp\in C^2(\bom)$ satisfies $\frac{\pa\vp}{\pa\nu}=0$ on $\pO$, we have
  \bas
	\frac{\partial |\na\vp|^2}{\pa \nu}
	\le 
	C \Big| \frac{\pa\vp}{\pa\nu}\Big|^2
	\qquad \mbox{on } \pO.
  \eas
\end{lem}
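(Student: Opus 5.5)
Both inequalities are pointwise statements at a fixed boundary point $x_0\in\pO$. I would prove them by differential geometry of $\pO$, using a smooth extension $N$ of the outward normal field $\nu$ to a neighbourhood of $\pO$, for instance $N=\na d$ with $d$ the signed distance to $\pO$. The constant $C$ will depend only on a bound for the second fundamental form of $\pO$, that is, on $\|D N\|_{L^\infty}$ near $\pO$. In both cases the starting identity is
\bas
	\frac{\pa|\na\vp|^2}{\pa\nu} = 2\, D^2\vp(\na\vp,\nu)
	\qquad \mbox{on } \pO,
\eas
and the task is to rewrite the mixed second derivative using the boundary condition.

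\emph{Dirichlet case.} Since $\vp=0$ on $\pO$, all tangential derivatives vanish, so $\na\vp=\frac{\pa\vp}{\pa\nu}\,\nu$ on $\pO$. Hence
\bas
	\frac{\pa|\na\vp|^2}{\pa\nu} = 2\frac{\pa\vp}{\pa\nu}\, D^2\vp(\nu,\nu).
\eas
Next I would use the standard decomposition of the Laplacian at a boundary point,
\bas
	\Del\vp = D^2\vp(\nu,\nu) + (n-1)H\frac{\pa\vp}{\pa\nu} + \Del_{\pO}\vp,
\eas
where $H$ is the mean curvature and $\Del_{\pO}$ the Laplace--Beltrami operator of $\pO$. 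This follows by taking the trace of $D^2\vp$ over an orthonormal tangent frame and comparing tangential second derivatives with covariant ones, the difference being the second fundamental form times $\frac{\pa\vp}{\pa\nu}$. Since $\vp|_{\pO}\equiv 0$, we have $\Del_{\pO}\vp=0$, and therefore $D^2\vp(\nu,\nu)=\Del\vp-(n-1)H\frac{\pa\vp}{\pa\nu}$. Substituting gives
\bas
	\frac{\pa|\na\vp|^2}{\pa\nu} = 2\frac{\pa\vp}{\pa\nu}\Del\vp - 2(n-1)H\Big|\frac{\pa\vp}{\pa\nu}\Big|^2 .
\eas
This yields the claim, indeed as an identity, with $C:=2(n-1)\|H\|_{L^\infty(\pO)}$.

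\emph{Neumann case.} Here $\na\vp\cdot N=0$ on $\pO$, so $\na\vp$ is tangential there. Differentiating the function $\na\vp\cdot N$, which vanishes identically on $\pO$, in the tangential direction $\tau:=\na\vp(x_0)$ gives
\bas
	D^2\vp(\tau,\nu) + \na\vp\cdot (DN\,\tau) = 0,
	\qquad\mbox{that is,}\qquad
	D^2\vp(\na\vp,\nu) = -\,\mathrm{II}(\na\vp,\na\vp).
\eas
Here $\mathrm{II}$ denotes the second fundamental form. Consequently $\frac{\pa|\na\vp|^2}{\pa\nu}\le 2\|DN\|_{L^\infty}|\na\vp|^2$ on $\pO$, with equality to $0$ in convex directions. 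I note that in the Neumann case the natural right-hand side is $C|\na\vp|^2$, which is the form in \cite[Lemma 4.2]{mizoguchi_souplet}. The expression $C|\frac{\pa\vp}{\pa\nu}|^2$ as printed would vanish identically under the Neumann condition, so I read it as $C|\na\vp|^2$. This is the version I will prove, and it is what is needed when controlling the boundary integrals for $\io|\na\Theta|^p$.

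\emph{Main obstacle.} The only delicate point is the boundary decomposition of $\Del\vp$ in the Dirichlet case, with the correct sign and normalisation of the curvature term. I would verify it in local coordinates in which $\pO$ is the graph $x_n=h(x')$ with $\na h(x_0')=0$. In these coordinates, differentiating $\vp(x',h(x'))\equiv0$ twice at $x_0'$ gives $\pa_{ii}\vp=-\pa_n\vp\,\pa_{ii}h$ for $i<n$, and summing over $i<n$ produces exactly the mean-curvature term. Since only an upper bound with some constant is required, sign conventions ultimately do not matter: the whole term is absorbed into $C|\frac{\pa\vp}{\pa\nu}|^2$.
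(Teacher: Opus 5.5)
Your proof is correct and follows the standard boundary-geometry argument. The paper does not prove this lemma itself; it cites \cite[Lemma 3.4]{black_win} for the Dirichlet case and \cite[Lemma 4.2]{mizoguchi_souplet} for the Neumann case. Your two steps, the boundary splitting of $\Del$ giving $\frac{\pa|\na\vp|^2}{\pa\nu}=2\frac{\pa\vp}{\pa\nu}\Del\vp-2(n-1)H|\frac{\pa\vp}{\pa\nu}|^2$ and the tangential differentiation of $\na\vp\cdot N$ giving $\frac{\pa|\na\vp|^2}{\pa\nu}=-2\,\mathrm{II}(\na\vp,\na\vp)$, are the same computations that underlie those cited results. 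You are also right that the second inequality must have $C|\na\vp|^2$ on the right: as printed it would assert $\frac{\pa|\na\vp|^2}{\pa\nu}\le 0$, which fails on nonconvex domains, and the corrected form is exactly what the paper uses for the boundary term in (\ref{e9}).
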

A second preliminary records a consequence entailed by continuity of a boundary trace embedding.
\begin{lem}\label{lem72}
  Let $p\ge 2$. Then there exists $C(p)>0$ such that for each $\vp\in C^2(\bom)$,
  \be{72.1}
	\int_{\pO} |\na\vp|^{p+1}
	\le C(p) \cdot \bigg\{ \io |\na\vp|^{p-2} |D^2\vp|^2 + \io |\na\vp|^p \bigg\}^\frac{1}{2}
		\cdot \bigg\{ \io |\na\vp|^{p+2} \bigg\}^\frac{1}{2}.
  \ee
\end{lem}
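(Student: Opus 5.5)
Our plan is to apply the boundary trace embedding $W^{1,1}(\Om)\hookrightarrow L^1(\pO)$ (\cite{alt}), already used in the proof of Lemma \ref{lem91}, to the function $\rho:=|\na\vp|^{p+1}$. We fix $c_1>0$ such that
\bas
	\int_{\pO}|\rho| \le c_1\io|\na\rho| + c_1\io|\rho|
	\qquad \mbox{for all } \rho\in C^1(\bom).
\eas
Strictly speaking, $|\na\vp|^{p+1}$ need only be Lipschitz, since $p+1\ge 3$ and $\vp\in C^2(\bom)$. To handle this, we either apply the inequality to $(|\na\vp|^2+\varepsilon)^{\frac{p+1}{2}}$ and let $\varepsilon\searrow 0$, or we note that the embedding extends to $W^{1,1}(\Om)$.

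The first step treats the gradient term. Almost everywhere we have $|\na|\na\vp|^{p+1}|\le (p+1)|\na\vp|^p|D^2\vp|$. Splitting $|\na\vp|^p=|\na\vp|^{\frac{p-2}{2}}\cdot|\na\vp|^{\frac{p+2}{2}}$ and applying the Cauchy-Schwarz inequality gives
\bas
	\io|\na\vp|^p|D^2\vp| \le \bigg\{\io|\na\vp|^{p-2}|D^2\vp|^2\bigg\}^\frac12\cdot\bigg\{\io|\na\vp|^{p+2}\bigg\}^\frac12.
\eas

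The second step treats the zero-order term. Splitting $|\na\vp|^{p+1}=|\na\vp|^{\frac p2}\cdot|\na\vp|^{\frac{p+2}{2}}$ and again using the Cauchy-Schwarz inequality yields
\bas
	\io|\na\vp|^{p+1}\le\bigg\{\io|\na\vp|^p\bigg\}^\frac12\cdot\bigg\{\io|\na\vp|^{p+2}\bigg\}^\frac12.
\eas

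Finally, we add the two estimates and use $\sqrt{A}+\sqrt{B}\le\sqrt2\sqrt{A+B}$ for $A,B\ge0$. This gives (\ref{72.1}) with $C(p):=\sqrt2\,c_1(p+1)$.

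There is no real obstacle here. The only point needing care is the regularity of $|\na\vp|^{p+1}$ near zeros of $\na\vp$, and the $\varepsilon$-regularization handles it. Note that the bound $|\na|\na\vp||\le|D^2\vp|$ holds almost everywhere.
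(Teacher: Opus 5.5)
Your proposal is correct and matches the paper's proof essentially verbatim: the trace embedding $W^{1,1}(\Om)\hookrightarrow L^1(\pO)$ applied to $\rho=|\na\vp|^{p+1}$, the pointwise bound $|\na\rho|\le (p+1)|\na\vp|^p|D^2\vp|$, and Cauchy--Schwarz with the same splittings for both the gradient and zero-order integrals. The only difference is your $\varepsilon$-regularization, which is harmless but unnecessary: since $\frac{p+1}{2}>1$, the function $\rho=(|\na\vp|^2)^{\frac{p+1}{2}}$ already belongs to $C^1(\bom)$ when $\vp\in C^2(\bom)$.
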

\proof
  Again drawing on continuity of the boundary trace embedding from $W^{1,1}(\Om)$ into $L^1(\pO)$ (\cite{alt}), we fix
  $c_1>0$ such that
  \bas
	\int_{\pO} |\rho| \le c_1 \io |\na\rho| + c_1 \io |\rho|
	\qquad \mbox{for all } \rho\in C^1(\bom),
  \eas
  which when applied to $\rho:=|\na\vp|^{p+1}$ for $p\ge 2$ and $\vp\in C^2(\bom)$ shows that
  \bas
	\int_{\pO} |\na\vp|^{p+1}
	\le \frac{p+1}{2} c_1 \io |\na\vp|^{p-1} \big| \na |\na\vp|^2 \big| + c_1 \io |\na\vp|^{p+1}
	\le (p+1) c_1 \io |\na\vp|^p |D^2\vp|+ c_1 \io |\na\vp|^{p+1}.
  \eas
  Since
  \bas
	\io |\na\vp|^p |D^2\vp|
	\le \bigg\{ \io |\na\vp|^{p-2} |D^2\vp|^2 \bigg\}^\frac{1}{2} \cdot \bigg\{ \io |\na\vp|^{p+2} \bigg\}^\frac{1}{2}
  \eas
  and
  \bas
	\io |\na\vp|^{p+1} 
	\le \bigg\{ \io |\na\vp|^p \bigg\}^\frac{1}{2} \cdot \bigg\{ \io |\na\vp|^{p+2} \bigg\}^\frac{1}{2}
  \eas
  by Young's inequality, this implies (\ref{72.1}).
\qed
We are now prepared to examine the functional in (\ref{04}) in order to rule out any finite-time blow-up in
(\ref{0}) involving bounded temperatures:\abs
\proofc of Proposition \ref{prop_ext}. \quad
  Let us assume on the contrary that in Proposition \ref{prop_loc} we have $\tm<\infty$, but that
  \be{e2}
	\Theta\le c_1
	\qquad \mbox{in } \Om\times (0,\tm)
  \ee
  with some $c_1>0$.
  Then the continuity of $\gamma$, $\gamma'$, $f$ and $f'$ on $[0,\infty)$ as well as the positivity of $\gamma$ ensure the existence
  of $c_i>0, i\in\{2,3,4,5,6\}$, such that
  \be{e22}
	c_2 \le \gamma(\Theta) \le c_3,
	\quad
	|\gamma'(\Theta)| \le c_4,
	\quad
	|f(\Theta)| \le c_5
	\quad \mbox{and} \quad
	|f'(\Theta)| \le c_6
	\qquad \mbox{in } \Om\times (0,\tm),
  \ee
  while Lemma \ref{lem11} yields 
  $\al_1\in (0,1)$ and $c_7>0$ fulfilling
  \be{e3}
	\|v(\cdot,t)\|_{C^{\al_1}(\bom)} \le c_7
	\qquad \mbox{for all } t\in (0,\tm).
  \ee
  To make suitable use of this, we pick any $p>n$ and utilize the first equation in (\ref{0v}) in verifying by straightforward 
  computation that
  \bea{e4}
	\frac{1}{p} \frac{d}{dt} \io |\na v|^p
	&=& \io |\na v|^{p-2} \na v \cdot \Big\{ \gamma(\Theta) \na\Del v + \gamma'(\Theta) \Del v \na\Theta \Big\} \nn\\
	& & - \io \Big\{ |\na v| ^{p-2} \Del v + \na |\na v|^{p-2} \cdot \na v\Big\} \cdot \gamma'(\Theta) \na\Theta\cdot\na v \nn\\
	& & + a\io |\na v|^p - a^2 \io |\na v|^{p-2} \na v\cdot\na u \nn\\
	& & - \io \Big\{ |\na v|^{p-2} \Del v + \na |\na v|^{p-2} \cdot\na v \Big\} f'(\Theta)\cdot\na\Theta \nn\\
	&=& - \io \gamma(\Theta) |\na v|^{p-2} |D^2 v|^2 
	- \frac{4(p-2)}{p^2} \io \gamma(\Theta) \Big| \na |\na v|^\frac{p}{2}\Big|^2 \nn\\
	& & - \frac{1}{2} \io \gamma'(\Theta) |\na v|^{p-2} \na\Theta\cdot\na |\na v|^2 
	- \io \gamma'(\Theta) (\na\Theta\cdot\na v) \na v\cdot\na |\na v|^{p-2} \nn\\
	& & + \frac{1}{2} \int_{\pO} \gamma(\Theta) |\na v|^{p-2} \frac{\pa |\na v|^2}{\pa\nu} \nn\\
	& & + a\io |\na v|^p - a^2 \io |\na v|^{p-2} \na v\cdot\na u \nn\\
	& & - \io \Big\{ |\na v|^{p-2} \Del v + \na |\na v|^{p-2} \cdot\na v \Big\} f'(\Theta)\cdot\na\Theta 
	\qquad \mbox{for all } t\in (0,\tm).
  \eea
  Here since $v_t=0$ and hence $\gamma(\Theta)\Del v = - \gamma'(\Theta) \na v\cdot\na\Theta + f'(\Theta)\cdot\na\Theta$
  on $\pO\times (0,\tm)$, using Lemma \ref{lem71} i) and (\ref{e22}) along with Young's inequality 
  we can find $c_8>0$ and $c_9>0$ such that
  \bea{e41}
	\frac{1}{2} \int_{\pO} \gamma(\Theta) |\na v|^{p-2} \frac{\pa |\na v|^2}{\pa\nu}
	&\le& 
	\frac{1}{2} \int_{\pO} \gamma(\Theta) |\na v|^{p-2} \cdot \Big\{ 2\frac{\pa v}{\pa\nu} \Del v + c_8 |\na v|^2 \Big\} \nn\\
	&=& - \int_{\pO} \gamma'(\Theta) |\na v|^{p-2} \frac{\pa v}{\pa\nu} \na v\cdot\na\Theta
	+ \int_{\pO} |\na v|^{p-2} \frac{\pa v}{\pa\nu} f'(\Theta)\cdot\na\Theta \nn\\
	& & + \frac{c_8}{2} \int_{\pO} \gamma(\Theta) |\na v|^p \nn\\ 
	&\le& c_9 \int_{\pO} |\na\Theta|\cdot |\na v|^p
	+ c_9 \int_{\pO} |\na\Theta|\cdot |\na v|^{p-1}
	+ c_9 \int_{\pO}|\na v|^p \nn\\
	&\le& c_9 \int_{\pO} |\na\Theta|\cdot |\na v|^p
	+ 2c_9 \int_{\pO} |\na v|^p
	+ c_9 \int_{\pO} |\na\Theta|^p
  \eea
  for all $t\in (0,\tm)$.
  Again by (\ref{e22}) and Young's inequality, we moreover find $c_i>0$, $i\in \{10,...,15\}$, such that
  \bea{e66}
	- \frac{1}{2} \io \gamma'(\Theta) |\na v|^{p-2} \na\Theta\cdot\na |\na v|^2 
	&=& - \io \gamma'(\Theta) |\na v|^{p-2} \na\Theta\cdot (D^2 v\cdot\na v) \nn\\	
	&\le& c_{10} \io |\na\Theta| \cdot |\na v|^{p-1} |D^2 v| \nn\\
	&\le& \frac{c_2}{4} \io |\na v|^{p-2} |D^2 v|^2 	
	+ c_{11} \io |\na\Theta|^2 |\na v|^p,
  \eea
  and that, similarly,
  \bea{e67}
	- \io \gamma'(\Theta) (\na\Theta\cdot\na v) \na v\cdot\na |\na v|^{p-2} 
	&\le& c_{12} \io |\na\Theta| \cdot |\na v|^{p-1} |D^2 v| \nn\\
	&\le& \frac{c_2}{4} \io |\na v|^{p-2} |D^2 v|^2
	+ c_{13} \io |\na \Theta|^2 |\na v|^p
  \eea
  as well as
  \bea{e68}
	& & \hs{-40mm}
	- \io \Big\{ |\na v|^{p-2} \Del v + \na |\na v|^{p-2} \cdot\na v \Big\} f'(\Theta)\cdot\na\Theta  \nn\\
	&\le& c_{14} \io |\na\Theta| \cdot |\na v|^{p-2} |D^2 v| \nn\\
	&\le& \frac{c_2}{4} \io |\na v|^{p-2} |D^2 v|^2
	+ c_{15} \io |\na\Theta|^2 |\na v|^{p-2} \nn\\
	&\le& \frac{c_2}{4} \io |\na v|^{p-2} |D^2 v|^2
	+ c_{15} \io |\na\Theta|^p 
	+ c_{15} \io |\na v|^p
  \eea
  for all $t\in (0,\tm)$.
  As furthermore, again by Young's inequality,
  \bas
	-a^2\io |\na v|^{p-2} \na v\cdot\na u
	\le a^2 \io |\na v|^p + a^2 \io |\na u|^p
	\qquad \mbox{for all } t\in (0,\tm),
  \eas
  upon inserting (\ref{e41}), (\ref{e66}), (\ref{e67}) and (\ref{e68}) into (\ref{e4}) we obtain $c_{16}>0$ such that
  for all $t\in (0,\tm)$,
  \bea{e8}
	\frac{d}{dt} \io |\na v|^p
	+ \frac{pc_2}{4} \io |\na v|^{p-2} |D^2 v|^2 
	&\le& c_{12} \io |\na \Theta|^2 |\na v|^p 
	+ c_{16} \int_{\pO} |\na\Theta| \cdot |\na v|^p \nn\\
	& & + c_{16} \io |\na v|^p 
	+ c_{16} \io |\na u|^p
	+ c_{16} \io |\na\Theta|^p \nn\\
	& & + c_{16} \int_{\pO} |\na v|^p
	+ c_{16} \int_{\pO} |\na \Theta|^p.
  \eea
  Here the crucial 
  first two summands on the right can be compensated by exploiting the diffusive action in the third equation from (\ref{0v}):
  A variant of the testing procedure performed above, namely, readily shows that thanks to Lemma \ref{lem71} ii) and
  (\ref{e22}) there exist $c_i>0$, $i\in \{17,18,19\}$, such that
  \bea{e9}
	\frac{1}{p} \frac{d}{dt} \io |\na\Theta|^p
	&=& \io |\na\Theta|^{p-2} \na\Theta \cdot \na \Big\{
	D\Del\Theta + \gamma(\Theta) |\na v-a\na u|^2
	+ f(\Theta) \cdot (\na v-a\na u)\Big\} \nn\\
	&=& - \frac{D}{2} \io \na |\na\Theta|^{p-2} \cdot\na |\na\Theta|^2
	+ \frac{D}{2} \int_{\pO} |\na\Theta|^{p-2} \frac{\pa |\na\Theta|^2}{\pa\nu}
	- D \io |\na\Theta|^{p-2} |D^2 \Theta|^2 \nn\\
	& & - \io \gamma(\Theta) \cdot \Big\{ |\na\Theta|^{p-2} \Del\Theta + \na |\na\Theta|^{p-2} \cdot\na\Theta\Big\}
		|\na v-a\na u|^2 \nn\\
	& & - \io \gamma(\Theta) \cdot \Big\{ |\na\Theta|^{p-2} \Del\Theta + \na |\na\Theta|^{p-2} \cdot\na\Theta\Big\}
		f(\Theta)\cdot (\na v-a\na u) \nn\\
	&\le& - \frac{D}{2} \io |\na \Theta|^{p-2} |D^2 \Theta|^2 
	+ c_{17} \int_{\pO} |\na\Theta|^p \nn\\
	& & + c_{17} \io |\na\Theta|^{p-2} |D^2\Theta| \cdot \big\{ |\na v|^2 + |\na u|^2 \big\} 
	+ c_{17} \io |\na\Theta|^{p-2} |D^2\Theta| \cdot \big\{ |\na v| + |\na u| \big\} \nn\\
	&\le& - \frac{D}{4} \io |\na \Theta|^{p-2} |D^2 \Theta|^2 
	+ c_{17} \int_{\pO} |\na\Theta|^p \nn\\
	& & + c_{18} \io |\na\Theta|^{p-2} |\na v|^4
	+ c_{18} \io |\na\Theta|^{p-2} |\na u|^4 \nn\\
	& & + c_{18} \io |\na\Theta|^{p-2} |\na v|^2
	+ c_{18} \io |\na\Theta|^{p-2} |\na u|^2 \nn\\
	&\le& - \frac{D}{4} \io |\na \Theta|^{p-2} |D^2 \Theta|^2 
	+ c_{18} \io |\na\Theta|^{p-2} |\na v|^4
	+ c_{18} \io |\na\Theta|^{p-2} |\na u|^4 \nn\\
	& & + c_{19} \io |\na v|^p
	+ c_{19} \io |\na u|^p
	+ c_{19} \io |\na\Theta|^p 
	+ c_{17} \int_{\pO} |\na\Theta|^p
  \eea
  for all $t\in (0,\tm)$.
  Apart from that, the second equation in (\ref{0v}) implies that due to Young's inequality,
  \bas
	\frac{1}{p+2} \frac{d}{dt} \io |\na u|^{p+2}
	= \io |\na u|^p \na u\cdot \na (v-au)
	\le \io |\na u|^p \na u \cdot \na v
	\le \io |\na u|^{p+2} + \io |\na v|^{p+2}
  \eas
  for all $t\in (0,\tm)$,
  whence again using Young's inequality to estimate $\io |\na u|^p \le \io |\na u|^{p+2} + |\Om|$
  for $t\in (0,\tm)$, from (\ref{e8}) and (\ref{e9}) we infer the existence of $c_{20}>0$ and $c_{21}>0$ such that
  \bas
	y(t):= 1 + \io |\na v(\cdot,t)|^p + \io |\na u(\cdot,t)|^{p+2} + \io |\na\Theta(\cdot,t)|^p,
	\qquad t\in [0,\tm),
  \eas
  satisfies
  \bea{e10}
	& & \hs{-20mm}
	y'(t) + c_{20} \io |\na v|^{p-2} |D^2 v|^2
	+ c_{20} \io |\na\Theta|^{p-2} |D^2\Theta|^2 \nn\\
	&\le& c_{21} y(t) 
	+ c_{21} \io |\na\Theta|^2 |\na v|^p
	+ c_{21} \io |\na\Theta|^{p-2} |\na v|^4
	+ c_{21} \io |\na\Theta|^{p-2} |\na u|^4 \nn\\
	& & + c_{21} \int_{\pO} |\na \Theta| \cdot |\na v|^p
	+ c_{21} \int_{\pO} |\na v|^p
	+ c_{21} \int_{\pO} |\na\Theta|^p
  \eea
  for all $t\in (0,\tm)$.
  We now rely on Lemma \ref{lem2} as well as Lemma \ref{lem72}
  and again (\ref{e2}) to pick $c_{22}>0$ and $c_{23}>0$ such that
  \be{e11}
	\io |\na\Theta|^{p+2} \le c_{22} \io |\na\Theta|^{p-2} |D^2\Theta|^2
	+ c_{22}
	\qquad \mbox{for all } t\in (0,\tm),
  \ee
  and that 
  \be{e111}
	\int_{\pO} |\na\vp|^{p+1} \le c_{23} \cdot \bigg\{ \io |\na\vp|^{p-2} |D^2 \vp|^2 + \io |\na \vp|^p \bigg\}^\frac{1}{2} \cdot
		\bigg\{ \io |\na\vp|^{p+2} \bigg\}^\frac{1}{2}
	\qquad \mbox{for all } \vp\in C^2(\bom),
  \ee
  and then employ Young's inequality four six times to find $c_{24}>0$ and $c_{25}>0$ fulfilling
  \bea{e12}
	& & \hs{-20mm}
	c_{21} \io |\na\Theta|^2 |\na v|^p
	+ c_{21} \io |\na\Theta|^{p-2} |\na v|^4
	+ c_{21} \io |\na\Theta|^{p-2} |\na u|^4 \nn\\
	&\le& \frac{c_{20}}{2c_{22}} \io |\na\Theta|^{p+2}
	+ c_{24} \io |\na v|^{p+2}
	+ c_{24} \io |\na u|^{p+2}
	\qquad \mbox{for all } t\in (0,\tm)
  \eea
  and 
  \bea{e122}
	& & \hs{-20mm}
	c_{21} \int_{\pO} |\na \Theta| \cdot |\na v|^p
	+ c_{21} \int_{\pO} |\na v|^p
	+ c_{21} \int_{\pO} |\na\Theta|^p \nn\\
	&\le& \frac{c_{20}}{2\sqrt{c_{22}} c_{23}} \int_{\pO} |\na\Theta|^{p+1}
	+ c_{25} \int_{\pO} |\na v|^{p+1}
	+ c_{25}
	\qquad \mbox{for all } t\in (0,\tm).
  \eea
  Here, (\ref{e11}) shows that on the right-hand side of (\ref{e12}) we have
  \bea{e123}
	\frac{c_{20}}{2c_{22}} \io |\na\Theta|^{p+2}
	\le \frac{c_{20}}{2} \io |\na\Theta|^{p-2} |D^2\Theta|^2
	+ \frac{c_{20}}{2} 
	\qquad \mbox{for all } t\in (0,\tm),
  \eea
  whereas a combination of (\ref{e111}) with Young's inequality and (\ref{e11}) estimates the first expression on the right
  of (\ref{e122}) according to
  \bea{e124}
	\frac{c_{20}}{2\sqrt{c_{22}} c_{23}} \int_{\pO} |\na\Theta|^{p+1} 
	&\le& \frac{c_{20}}{2\sqrt{c_{22}}} \cdot \bigg\{ \io |\na\Theta|^{p-2} |D^2\Theta|^2 + \io |\na\Theta|^p 
		\bigg\}^\frac{1}{2} \cdot \bigg\{ \io |\na\Theta|^{p+2} \bigg\}^\frac{1}{2} \nn\\
	&\le&
	\frac{c_{20}}{4} \io |\na\Theta|^{p-2} |D^2\Theta|^2
	+ \frac{c_{20}}{4} \io |\na\Theta|^p
	+ \frac{c_{20}}{4c_{22}} \io |\na\Theta|^{p+2} \nn\\
	&\le& \frac{c_{20}}{2} \io |\na\Theta|^{p-2} |D^2\Theta|^2
	+ \frac{c_{20}}{4} \io |\na\Theta|^p
	+ \frac{c_{20}}{4}
  \eea
  for all $t\in (0,\tm)$.
  Apart from that, an application of (\ref{e111}) to the first solution component in (\ref{0v}), followed by Young's inequality,
  confirms that on the right of (\ref{e122}) we moreover have
  \bas
	c_{25} \int_{\pO} |\na v|^{p+1}
	&\le& c_{23} c_{25} \cdot \bigg\{ \io |\na v|^{p-2} |D^2 v|^2 + \io |\na v|^p \bigg\}^\frac{1}{2} \cdot
		\bigg\{ \io |\na v|^{p+2} \bigg\}^\frac{1}{2} \nn\\
	&\le& \frac{c_{20}}{2} \io |\na v|^{p-2} |D^2 v|^2
	+ \frac{c_{20}}{2} \io |\na v|^p
	+ \frac{c_{23}^2 c_{25}^2}{2 c_{20}} \io |\na v|^{p+2}
	\qquad \mbox{for all } t\in (0,\tm).
  \eas
  In conjunction with (\ref{e12})-(\ref{e124}), this shows that (\ref{e10}) entails the existence of $c_{26}>0$ fulfilling
  \bas
	y'(t) + \frac{c_{20}}{2} \io |\na v|^{p-2} |D^2 v|^2
	\le c_{26} y(t) + c_{26} \io |\na v|^{p+2} + c_{26}
	\qquad \mbox{for all } t\in (0,\tm),
  \eas
  and in order to appropriately control the last integral herein, we rely on the equicontinuity property entailed by (\ref{e3})
  in applying Lemma \ref{lem_LW} along with 
  Lemma \ref{lem94} to choose $c_{27}>0$ in such a manner that
  \bas
	c_{27} \io |\na v|^{p+2} \le \frac{c_{20}}{2} \io |\na v|^{p-2} |D^2 v|^2 + c_{27}
	\qquad \mbox{for all } t\in (0,\tm).
  \eas
  Therefore,
  \bas
	y'(t) \le c_{26} y(t) + c_{26} + c_{27}
	\le (2 c_{26} + c_{27})y(t)
	\qquad \mbox{for all } t\in (0,\tm),
  \eas
  so that Gronwall's inequality ensures that $y$ is bounded in $(0,\tm)$, especially meaning that
  \bas
	\sup_{t\in (0,\tm)} \|\na v(\cdot,t)-a\na u(\cdot,t)\|_{L^p(\Om)} <\infty.
  \eas
  Together with Lemma \ref{lem94} and (\ref{e2}), in view of the identity $v-au=u_t$ and 
  the inequality $p>n$ this contradicts (\ref{Ext}) and hence proves the claim.
\qed
\mysection{Blow-up in the presence of superlinear $\gamma$}\label{sect4}
\subsection{A necessary condition on $(u_0,u_{0t},\Theta_0)$ for existence up to $t=T$}
A constitutive step toward our blow-up detection is now formed by the following rigorous version of the 
inequality announced near (\ref{001}), available under the assumption that in addition to (\ref{gf}), 
also (\ref{gp}), (\ref{gi}) and (\ref{gf1}) hold:
\begin{lem}\label{lem601}
  Let $a>0$ and $D>0$, suppose that $\gamma$ and $f$ comply with (\ref{gf}), (\ref{gp}), (\ref{gi}) and (\ref{gf1}),
  and assume (\ref{Init}). 
  Then
  \be{601.1}
	\int_0^T \io |\na u_t|^2 
	\le 2 \io \psi(\Theta_0)
	+ \frac{\Lam |\Om| T}{\gamma_0(T)}
	\qquad \mbox{for all } T\in (0,\tm),
  \ee
  where
  \be{psi}
	\psi(\xi):=\int_\xi^\infty \frac{d\sig}{\gamma(\sig)},
	\qquad \xi\ge 0
  \ee
  and 
  \be{g0}
	\gamma_0(T):=\gamma\Big( \inf_{(x,t)\in\Om\times (0,T)} \Theta(x,t) \Big),
	\qquad T\in (0,\tm).
  \ee
\end{lem}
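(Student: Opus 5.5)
We test the second equation in (\ref{0}) against $\psi'(\Theta)=-\frac{1}{\gamma(\Theta)}$. Since $\psi$ is nonincreasing and positive, with $\psi\ge 0$ by (\ref{gi}), the functional $\io\psi(\Theta)$ is nonnegative and well defined. The source term $\gamma(\Theta)|\na u_t|^2$ is then converted into the bare dissipation $|\na u_t|^2$.

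Concretely, for $t\in(0,\tm)$ we compute
\bas
	\frac{d}{dt}\io\psi(\Theta)
	= -\io\frac{\Theta_t}{\gamma(\Theta)}
	= -D\io\frac{\Del\Theta}{\gamma(\Theta)} - \io|\na u_t|^2 - \io\frac{f(\Theta)\cdot\na u_t}{\gamma(\Theta)}.
\eas
Integrating by parts under the Neumann condition, the diffusion term becomes
\bas
	-D\io\frac{\Del\Theta}{\gamma(\Theta)} = -D\io\frac{\gamma'(\Theta)}{\gamma^2(\Theta)}|\na\Theta|^2 \le 0,
\eas
which is nonpositive by (\ref{gp}). For the dilation term, Young's inequality gives
\bas
	\Big|\frac{f\cdot\na u_t}{\gamma}\Big| \le \frac12|\na u_t|^2 + \frac{|f|^2}{2\gamma^2}.
\eas
By (\ref{gf1}) we have $\frac{|f|^2}{\gamma^2}\le\frac{\Lam}{\gamma(\Theta)}$, and since $\gamma$ is nondecreasing, $\gamma(\Theta)\ge\gamma_0(T)$ in $\Om\times(0,T)$. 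Combining these bounds yields
\bas
	\frac{d}{dt}\io\psi(\Theta) + \frac12\io|\na u_t|^2 \le \frac{\Lam|\Om|}{2\gamma_0(T)}
	\qquad\mbox{for all } t\in(0,T).
\eas
Integrating over $(0,T)$, dropping $\io\psi(\Theta(\cdot,T))\ge0$ and multiplying by $2$ gives exactly (\ref{601.1}).

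The main obstacle is justifying this computation rigorously near $t=0$. The solution is only $C^{2,1}$ for $t>0$ and merely $C^{1,0}$ up to $t=0$, so we first integrate over $(t_0,T)$ with $t_0\in(0,T)$. We then let $t_0\searrow0$, using continuity of $\Theta$ on $\bom\times[0,T]$ to get $\io\psi(\Theta(\cdot,t_0))\to\io\psi(\Theta_0)$, together with monotone convergence for $\int_{t_0}^T\io|\na u_t|^2$.

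Two further points need checking. First, $\inf\Theta$ over $\Om\times(0,T)$ is attained in $[0,\infty)$ because $\Theta\ge0$, so $\gamma_0(T)>0$ is well defined. Second, the boundary term from the integration by parts vanishes because $\frac{\pa\Theta}{\pa\nu}=0$. Note that the monotonicity of $\gamma$ is used twice: for the sign of the diffusion term and for the lower bound $\gamma(\Theta)\ge\gamma_0(T)$.
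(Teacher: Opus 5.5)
Your proposal is correct and follows the paper's proof step by step. Like the paper, you test the temperature equation with $\psi'(\Theta)=-\frac{1}{\gamma(\Theta)}$, discard $-D\io\frac{\gamma'(\Theta)}{\gamma^2(\Theta)}|\na\Theta|^2\le 0$ via (\ref{gp}), handle the dilation term by Young's inequality together with (\ref{gf1}) and $\gamma(\Theta)\ge\gamma_0(T)$, and then integrate in time and drop $\io\psi(\Theta(\cdot,T))\ge 0$; your extra limiting argument $t_0\searrow 0$ only makes explicit a regularity detail near $t=0$ that the paper leaves implicit.
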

\proof
  Using that $\psi'=-\frac{1}{\gamma}$, from the third equation in (\ref{0v}) we derive that
  \bas
	\frac{d}{dt} \io \psi(\Theta)
	&=& - \io \frac{1}{\gamma(\Theta)} \Theta_t  \\
	&=& - D \io \frac{1}{\gamma(\Theta)} \Del\Theta
	- \io |\na u_t|^2
	- \io \frac{1}{\gamma(\Theta)} f(\Theta)\cdot\na u_t
	\qquad \mbox{for all } t\in (0,\tm),
  \eas
  where since $\gamma'\ge 0$,
  \bas
	-D \io \frac{1}{\gamma(\Theta)} \Del\Theta
	= - D \io \frac{\gamma'(\Theta)}{\gamma^2(\Theta)} |\na\Theta|^2 \le 0
	\qquad \mbox{for all } t\in (0,\tm),
  \eas
  and where by Young's inequality and (\ref{gf1}), 
  \bas
	- \io \frac{1}{\gamma(\Theta)} f(\Theta)\cdot\na u_t
	&\le& \frac{1}{2} \io |\na u_t|^2
	+ \frac{1}{2} \io \frac{|f|^2(\Theta)}{\gamma^2(\Theta)} \\
	&\le& \frac{1}{2} \io |\na u_t|^2
	+ \frac{\Lam}{2} \io \frac{1}{\gamma(\Theta)} \\
	&\le& \frac{1}{2} \io |\na u_t|^2
	+ \frac{\Lam |\Om|}{2\gamma_0(T)}
	\qquad \mbox{for all $t\in (0,T)$ and $T\in (0,\tm)$,}
  \eas
  because $\gamma(\Theta)\ge \gamma_0(T)$ in $\Om\times (0,T)$ for all $T\in (0,\tm)$ by (\ref{g0}), and again 
  by monotonicity of $\gamma$.
  Therefore,
  \bas
	\frac{d}{dt} \io \psi(\Theta)
	+ \frac{1}{2} \io |\na u_t|^2
	\le \frac{\Lam |\Om|}{2\gamma_0(T)}
	\qquad \mbox{for all $t\in (0,T)$ and $T\in (0,\tm)$,}
  \eas
  which upon an integration implies that, indeed,
  \bas
	\frac{1}{2} \int_0^T \io |\na u_t|^2
	&\le& \io \psi(\Theta_0) - \io \psi\big(\Theta(\cdot,T)\big)
	+ \frac{\Lam |\Om| T}{2\gamma_0(T)} \\
	&\le& \io \psi(\Theta_0) 
	+ \frac{\Lam |\Om| T}{2\gamma_0(T)}
	\qquad \mbox{for all $t\in (0,T)$ and $T\in (0,\tm)$,}
  \eas
  as $\psi$ is nonnegative.
\qed
A relationship between the expression on the right of (\ref{601.1}) and an integral $\io |\na u|^2$ is readily established:
\begin{lem}\label{lem63}
  Let $a>0$ and $D>0$, and assume (\ref{gf}), (\ref{gp}), (\ref{gi}), (\ref{gf1}) and (\ref{Init}).
  Then
  \be{63.1}
	\io |\na u_0|^2
	\le 2 \io |\na u(\cdot,t)|^2 + 2t\int_0^t \io |\na u_t|^2
	\qquad \mbox{for all } t\in (0,\tm).
  \ee
\end{lem}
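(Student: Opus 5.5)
The inequality is elementary, so I will argue pointwise in space via the fundamental theorem of calculus in time and then apply the Cauchy--Schwarz inequality. By the regularity in (\ref{tl1}), $u_t\in C^{1,0}(\bom\times[0,\tm))$, and hence $\na u$ and $\na u_t$ are continuous on $\bom\times[0,\tm)$. Consequently, for each $x\in\bom$ and $t\in(0,\tm)$ we have
\bas
	\na u_0(x) = \na u(x,t) - \int_0^t \na u_t(x,s)\,ds.
\eas
Here differentiation in $x$ and in $t$ may be interchanged: $u(x,t)=u_0(x)+\int_0^t u_t(x,s)\,ds$, and $\na u_t$ is continuous up to $t=0$, so differentiating under the integral sign is justified.

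Next, I use the elementary inequality $|\xi+\zeta|^2\le 2|\xi|^2+2|\zeta|^2$ for $\xi,\zeta\in\R^n$. Together with the Cauchy--Schwarz inequality in the form
\bas
	\Big|\int_0^t \na u_t(x,s)\,ds\Big|^2 \le t\int_0^t |\na u_t(x,s)|^2\,ds,
\eas
this yields, for all $x\in\Om$,
\bas
	|\na u_0(x)|^2 \le 2|\na u(x,t)|^2 + 2t\int_0^t|\na u_t(x,s)|^2\,ds.
\eas

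Integrating over $\Om$ and using Fubini's theorem, which applies because the integrands are continuous on the compact set $\bom\times[0,t]$, gives (\ref{63.1}). Note that none of the structural hypotheses (\ref{gp}), (\ref{gi}), (\ref{gf1}) are actually needed. The only delicate point is the regularity near $t=0$ that justifies the fundamental theorem of calculus for $\na u$, and this is supplied directly by Proposition \ref{prop_loc}.
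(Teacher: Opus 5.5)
Your proposal is correct and follows essentially the same route as the paper. Both arguments use the pointwise representation $\nabla u(x,t)=\nabla u_0(x)+\int_0^t\nabla u_t(x,s)\,ds$, Cauchy--Schwarz in time, and the elementary inequality $|\xi+\zeta|^2\le 2|\xi|^2+2|\zeta|^2$. The only cosmetic difference is that the paper first integrates to bound $\int_\Omega|\nabla u(\cdot,t)-\nabla u_0|^2$ and then applies Young's inequality, while you apply the elementary inequality pointwise before integrating.
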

\proof
  We represent $\na u$ according to
  \bas
	\na u(x,t)=\na u_0(x) + \int_0^t \na u_t(x,s) ds
	\qquad \mbox{for } (x,t)\in\Om\times (0,\tm)
  \eas
  to see by means of the Cauchy-Schwarz inequality that
  \bas
	\Big| \na u(x,t)-\na u_0(x)\Big|^2
	= \bigg| \int_0^t \na u_t(x,s) ds \bigg|^2 
	\le t\int_0^t |\na u_t(x,s)|^2 ds
	\qquad \mbox{for all $x\in\Om$ and } t\in (0,\tm).
  \eas
  Hence,
  \bas
	\io |\na u(\cdot,t)-\na u_0|^2
	\le t \int_0^t \io |\na u_t|^2
	\qquad \mbox{for all } t\in (0,\tm),
  \eas
  so that (\ref{63.1}) results upon observing that by Young's inequality,
  \bas
	\io |\na u_0|^2
	&=& \io \Big| \na u(\cdot,t)-\big(\na u(\cdot,t)-\na u_0\big) \Big|^2 \\
	&\le& 2 \io |\na u(\cdot,t)|^2
	+ 2 \io |\na u(\cdot,t)-\na u_0|^2
  \eas
  for all $t\in (0,\tm)$.
\qed
On testing the first equation in (\ref{0v}) by $v$, however, the difference between $\na u_t$ and $\na u$
can be estimated by some constant expression:
\begin{lem}\label{lem61}
  Let $a>0$ and $D>0$, and assume (\ref{gf}), (\ref{gp}), (\ref{gi}), (\ref{gf1}) and (\ref{Init}).
  Then
  \be{61.1}
	\int_0^T \io |\na u_t + a\na u|^2 
	\le \frac{3a^2 e^{2aT}}{\gamma_0(T)} \cdot \io u_0^2
	+ \frac{2 e^{2aT}}{\gamma_0(T)} \cdot \io u_{0t}^2
	+ \frac{\Lam |\Om| e^{2aT}}{2a\gamma_0(T)} + \frac{\Lam |\Om| T}{\gamma_0(T)}
	\qquad \mbox{for all } T\in (0,\tm),
  \ee
  where $(\gamma_0(T))_{T\in (0,\tm)}$ is as in (\ref{g0}).
\end{lem}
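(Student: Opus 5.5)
We test the first equation in (\ref{0v}) by $v=u_t+au$, which vanishes on $\pO$. Integrating by parts gives
\bas
	\frac{1}{2}\frac{d}{dt}\io v^2 + \io \gamma(\Theta)|\na v|^2
	= a\io v^2 - a^2\io uv - \io f(\Theta)\cdot\na v .
\eas
Here the boundary term from $\na\cdot f(\Theta)$ disappears because $v=0$ on $\pO$. Since $\gamma$ is nondecreasing, $\gamma(\Theta)\ge\gamma_0(T)$ in $\Om\times(0,T)$. For the flux term we use Young's inequality together with (\ref{gf1}):
\bas
	\Big|\io f(\Theta)\cdot\na v\Big| \le \frac12\io\gamma(\Theta)|\na v|^2+\frac12\io\frac{|f|^2(\Theta)}{\gamma(\Theta)}
	\le \frac12\io\gamma(\Theta)|\na v|^2+\frac{\Lam|\Om|}{2}.
\eas
This leaves $\frac12\io\gamma(\Theta)|\na v|^2\ge \frac{\gamma_0(T)}{2}\io|\na v|^2$ on the left-hand side.

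The coupling term $-a^2\io uv$ has to be absorbed without losing the factor structure in (\ref{61.1}). The plan is to add a multiple of $\io u^2$ to the energy, using $u_t=v-au$:
\bas
	\frac{d}{dt}\io u^2 = 2\io uv - 2a\io u^2 .
\eas
Then $\frac{d}{dt}\big\{\frac12\io v^2+\frac{a^2}{2}\io u^2\big\}$ loses the $uv$ cross terms exactly and gains $-a^3\io u^2\le 0$. With $E(t):=\frac12\io v^2+\frac{a^2}{2}\io u^2$ this yields
\bas
	E'(t)+\frac{\gamma_0(T)}{2}\io|\na v|^2\le 2aE(t)+\frac{\Lam|\Om|}{2},\qquad t\in(0,T).
\eas
Gronwall's lemma then gives $E(t)\le e^{2aT}\big(E(0)+\frac{\Lam|\Om|}{4a}\big)$ for $t\in(0,T)$. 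Integrating the differential inequality over $(0,T)$ and using $E\ge0$, we obtain
\bas
	\frac{\gamma_0(T)}{2}\int_0^T\io|\na v|^2\le E(0)+2a\int_0^T E+\frac{\Lam|\Om|T}{2}.
\eas

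It remains to assemble the constants. We have $E(0)=\frac12\io(u_{0t}+au_0)^2+\frac{a^2}{2}\io u_0^2\le \io u_{0t}^2+\frac{3a^2}{2}\io u_0^2$. The term $2a\int_0^T E$ is controlled through the Gronwall bound. A cleaner route is to multiply by $e^{-2at}$ before integrating, so that the terms with $e^{2aT}$ appear exactly once. Multiplying the resulting inequality by $2/\gamma_0(T)$ produces $\frac{3a^2e^{2aT}}{\gamma_0}\io u_0^2+\frac{2e^{2aT}}{\gamma_0}\io u_{0t}^2+\frac{\Lam|\Om|e^{2aT}}{2a\gamma_0}+\frac{\Lam|\Om|T}{\gamma_0}$, which is (\ref{61.1}) since $\na v=\na u_t+a\na u$.

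The main obstacle is this bookkeeping. The term $2a\int_0^TE$ must be combined with the Gronwall bound so that it produces exactly these coefficients rather than an extra factor of $T$. The weighted integration with $e^{-2at}$ is the first choice, since that is where the $\frac{1}{2a}$ factor in front of $\Lam|\Om|e^{2aT}$ arises naturally. The term $\frac{\Lam|\Om|T}{\gamma_0}$ comes from integrating the constant source term, and this is why the $\gamma$-weighted Young splitting above is essential.
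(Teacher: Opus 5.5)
Your proof is correct. Up to the last step it is the paper's argument: you test the first equation of (\ref{0v}) by $v$, add $\frac{a^2}{2}\io u^2$ so that the $uv$ terms cancel and $-a^3\io u^2$ can be discarded, use the $\gamma$-weighted Young splitting with (\ref{gf1}), and use $\gamma(\Theta)\ge\gamma_0(T)$ via (\ref{gp}).

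The difference is in how the differential inequality is integrated. The paper integrates it without a weight. It bounds $2a\int_0^T y$ using the time-dependent Gronwall estimate $y(t)\le (y(0)+\frac{\Lam|\Om|}{4a})e^{2at}$. (The uniform-in-$t$ bound with $e^{2aT}$ that you wrote would cost an extra factor $2aT$ there.) This gives $(y(0)+\frac{\Lam|\Om|}{4a})e^{2aT}+\frac{\Lam|\Om|T}{2}$, and hence exactly the four terms of (\ref{61.1}).

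Your integrating-factor route is also valid, but your description of what it produces is inaccurate. Start from $(e^{-2at}E)'+\frac{\gamma_0(T)}{2}e^{-2at}\io|\na v|^2\le\frac{\Lam|\Om|}{2}e^{-2at}$, drop $e^{-2aT}E(T)\ge 0$, and use $e^{-2at}\ge e^{-2aT}$ on $(0,T)$. This gives $\int_0^T\io|\na v|^2\le\frac{2e^{2aT}}{\gamma_0(T)}E(0)+\frac{\Lam|\Om|(e^{2aT}-1)}{2a\gamma_0(T)}$, with no term $\frac{\Lam|\Om|T}{\gamma_0(T)}$ at all. Contrary to your closing remark, that term does not come from integrating the constant source in this route.

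This bound is slightly sharper than (\ref{61.1}), which then follows \emph{a fortiori}. The paper's $T$-term is only the price of rounding $\frac{\Lam|\Om|}{4a}(e^{2at}-1)$ up to $\frac{\Lam|\Om|}{4a}e^{2at}$ in the Gronwall bound before integrating.
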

\proof
  We multiply the first equation in (\ref{0v}) by $v$, integrate by parts and use Youngs's inequality along with (\ref{gf1})
  to see that
  \bas
	\frac{1}{2} \frac{d}{dt} \io v^2
	+ \io \gamma(\Theta) |\na v|^2
	&=& a \io v^2 - a^2 \io uv
	- \io f(\Theta)\cdot\na v \\
	&\le& a \io v^2 - a^2 \io uv
	+ \frac{1}{2} \io \gamma(\Theta) |\na v|^2
	+ \frac{1}{2} \io \frac{|f|^2(\Theta)}{\gamma(\Theta)} \\
	&\le& a \io v^2 - a^2 \io uv
	+ \frac{1}{2} \io \gamma(\Theta) |\na v|^2
	+ \frac{\Lam |\Om|}{2}	
	\qquad \mbox{for all } t\in (0,\tm),
  \eas  
  while testing the second equation in (\ref{0v}) against $u$ yields
  \bas
	\frac{1}{2} \frac{d}{dt} \io u^2 = \io uv - a \io u^2
	\qquad \mbox{for all } t\in (0,\tm).
  \eas
  By linear combination of these inequalities, we obtain that since $a\ge 0$,
  \bas
	y(t):=\frac{1}{2} \io v^2 + \frac{a^2}{2} \io u^2,
	\qquad t\in [0,\tm),
  \eas
  satisfies
  \bea{61.3}
	y'(t) + \frac{1}{2} \io \gamma(\Theta) |\na v|^2
	&\le& a \io v^2 - a^3 \io u^2 
	+ \frac{\Lam |\Om|}{2} 
	\le a \io v^2
	+ \frac{\Lam |\Om|}{2} \nn\\
	&\le& 2a y(t) + \frac{\Lam |\Om|}{2}
	\qquad \mbox{for all } t\in (0,\tm).
  \eea
  In particular, $y' \le 2ay+\frac{\Lam |\Om|}{2}$ on $(0,\tm)$, so that
  \bas
	y(t) 
	&\le& y(0) e^{2at} + \frac{\Lam |\Om|}{2} \int_0^t e^{2a(t-s)} ds \\
	&=& y(0) e^{2at} + \frac{\Lam |\Om|}{4a} \big( e^{2at} - 1 \big) \\
	&\le& \Big( y(0) + \frac{\Lam |\Om|}{4a} \Big) e^{2at}
	\qquad \mbox{for all } t\in (0,\tm),
  \eas
  whereupon an integration in (\ref{61.3}) shows that, by nonnegativity of $y$,
  \bea{61.4}
	\frac{1}{2} \int_0^T \io \gamma(\Theta) |\na v|^2
	&\le& y(0) + 2a\int_0^T y(t) dt + \frac{\Lam |\Om| T}{2} \nn\\
	&\le& y(0) + \Big(2ay(0)+ \frac{\Lam |\Om|}{2}\Big) \int_0^T e^{2at} dt + \frac{\Lam |\Om| T}{2} \nn\\
	&=& y(0) + \Big(2ay(0)+ \frac{\Lam |\Om|}{2}\Big) \cdot \frac{1}{2a} \big( e^{2at}-1\big) + \frac{\Lam |\Om| T}{2} \nn\\
	&\le& \Big(y(0) + \frac{\Lam |\Om|}{4a}\Big) e^{2at} + \frac{\Lam |\Om| T}{2}
	\qquad \mbox{for all } t\in (0,\tm).
  \eea
  Since Young's inequality warrants that 
  \bas
	y(0)
	= \frac{1}{2} \io (u_{0t} + au_0)^2 + \frac{a^2}{2} \io u_0^2
	\le \io u_{0t}^2 + \frac{3a^2}{2} \io u_0^2
  \eas
  using the monotonicity of $\gamma$ in estimating
  \bas
	\frac{1}{2} \int_0^T \io \gamma(\Theta) |\na v|^2
	\ge \frac{\gamma_0(T)}{2} \int_0^T \io |\na v|^2
	= \frac{\gamma_0(T)}{2} \int_0^T \io |\na u_t + a\na u|^2
	\qquad \mbox{for } T\in (0,\tm)
  \eas
  turns (\ref{61.4}) into (\ref{61.1}).
\qed
When combined with Lemma \ref{lem601}, this immediately entails an upper bound also for $\int_0^T \io|\na u|^2$:
\begin{lem}\label{lem62}
  If $a>0$ and $D>0$, and if (\ref{gf}), (\ref{gp}), (\ref{gi}), (\ref{gf1}) and (\ref{Init}) hold, then
  with $\psi$ and $(\gamma_0(T))_{T\in (0,\tm)}$ as in (\ref{psi}) and (\ref{g0}),
  \be{62.1}
	\int_0^T \io |\na u|^2 
	\le \frac{4}{a^2} \io \psi(\Theta_0) 
	+ \frac{6 e^{2aT}}{\gamma_0(T)} \cdot \io u_0^2
	+ \frac{4 e^{2aT}}{a^2\gamma_0(T)} \cdot \io u_{0t}^2
	+ \frac{\Lam |\Om| e^{2aT}}{a^3\gamma_0(T)}
	+ \frac{4\Lam |\Om| T}{a^2 \gamma_0(T)}
  \ee
  for all $T\in (0,\tm)$.
\end{lem}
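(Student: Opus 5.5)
We will obtain (\ref{62.1}) by writing $a\na u = (\na u_t + a\na u) - \na u_t$ pointwise. By Young's inequality,
\[
a^2 |\na u|^2 \le 2|\na u_t + a\na u|^2 + 2|\na u_t|^2
\qquad \mbox{in } \Om\times (0,\tm).
\]
Integrating over $\Om\times (0,T)$ and dividing by $a^2$ then gives
\[
\int_0^T \io |\na u|^2 \le \frac{2}{a^2} \int_0^T \io |\na u_t + a\na u|^2 + \frac{2}{a^2} \int_0^T \io |\na u_t|^2
\qquad \mbox{for all } T\in (0,\tm).
\]

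Next we insert the two earlier bounds. Lemma \ref{lem61} controls the first integral on the right, and Lemma \ref{lem601} controls the second.

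From Lemma \ref{lem601}, the second term is at most
\[
\frac{4}{a^2}\io\psi(\Theta_0) + \frac{2\Lam|\Om| T}{a^2\gamma_0(T)}.
\]

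From Lemma \ref{lem61}, the first term is at most
\[
\frac{6 e^{2aT}}{\gamma_0(T)} \io u_0^2 + \frac{4e^{2aT}}{a^2\gamma_0(T)}\io u_{0t}^2 + \frac{\Lam|\Om| e^{2aT}}{a^3\gamma_0(T)} + \frac{2\Lam|\Om| T}{a^2\gamma_0(T)}.
\]
Here the $u_0$ coefficient comes out as $\frac{2}{a^2}\cdot 3a^2 = 6$, which matches (\ref{62.1}) exactly.

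Adding the two estimates, the terms proportional to $T$ combine to $\frac{4\Lam|\Om|T}{a^2\gamma_0(T)}$. This reproduces (\ref{62.1}) term by term.

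There is no genuine obstacle in this argument. The only points needing care are to use the same $\gamma_0(T)$ from (\ref{g0}) in both lemmas, which holds because both are stated with that definition for every $T\in(0,\tm)$, and to check the constant bookkeeping above.
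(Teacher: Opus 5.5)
Your proposal is correct and is the paper's proof. It uses the same splitting $a\na u = (\na u_t + a\na u) - \na u_t$ with Young's inequality, followed by inserting Lemma \ref{lem61} and (\ref{601.1}). Your constant bookkeeping also checks out: in particular $\frac{2}{a^2}\cdot 3a^2 = 6$, and the two contributions $\frac{2\Lam|\Om|T}{a^2\gamma_0(T)}$ add up to $\frac{4\Lam|\Om|T}{a^2\gamma_0(T)}$.
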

\proof
  Since Young's inequality implies that
  for all $T\in (0,\tm)$ we have
  \bas
	\int_0^T \io |\na u|^2
	= \frac{1}{a^2} \int_0^T \io \Big| \na u_t - (\na u_t + a\na u)\Big|^2
	\le \frac{2}{a^2} \int_0^T \io |\na u_t|^2
	+ \frac{2}{a^2} \int_0^T \io |\na u_t + a\na u|^2,
  \eas
  this is a direct consequence of Lemma \ref{lem61} when combined with (\ref{601.1}).
\qed
To develop this further, we now integrate the inequality from Lemma \ref{lem63} and once more explicitly use
Lemma \ref{lem601}. We thereby obtain the following statement which can be viewed as providing, given $T>0$, 
a condition on the initial data that is necessary for a solution to exist up to time $T$.
\begin{lem}\label{lem64}
  Let $a>0$ and $D>0$, suppose that (\ref{gf}), (\ref{gp}), (\ref{gi}), (\ref{gf1}) and (\ref{Init}) are satisfied, and
  let $\psi$ and $(\gamma_0(T))_{T\in (0,\tm)}$ be as in (\ref{psi}) and (\ref{g0}).
  Then for all $T\in (0,\tm)$,
  \bea{64.1}
	\io |\na u_0|^2
	&\le& \Big\{ \frac{8}{a^2 T} + 4T\Big\} \cdot \io \psi(\Theta_0)
	+ \frac{12 e^{2aT}}{T \gamma_0(T)} \cdot \io u_0^2
	+ \frac{8 e^{2aT}}{a^2 T \gamma_0(T)} \cdot \io u_{0t}^2 \nn\\
	& & 
	+ \frac{2\Lam |\Om| e^{2aT}}{a^3 T\gamma_0(T)}
	+ \frac{8 \Lam |\Om|}{a^2 \gamma_0(T)}
	+ \frac{2\Lam |\Om| T^2}{\gamma_0(T)}.
  \eea
\end{lem}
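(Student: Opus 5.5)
Fix $T\in(0,\tm)$. The plan is to integrate (\ref{63.1}) from Lemma \ref{lem63} over $t\in(0,T)$ and then control both resulting terms by the preceding lemmas.

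Since the left-hand side of (\ref{63.1}) does not depend on $t$, integration gives
\bas
	T\io |\na u_0|^2
	\le 2\int_0^T \io |\na u|^2 + 2\int_0^T t\int_0^t \io |\na u_t|^2\,dt .
\eas
In the second term we use that $t\mapsto \int_0^t\io|\na u_t|^2$ is nondecreasing. Hence for $t\le T$ it is bounded by $\int_0^T\io|\na u_t|^2$, and $\int_0^T t\,dt=\frac{T^2}{2}$. This term is therefore at most $T^2\int_0^T\io|\na u_t|^2$. Here we must check that Lemma \ref{lem601} applies with the fixed horizon $T$. Indeed, the quantity $\gamma_0(T)$ is nonincreasing in its argument, being $\gamma$ of an infimum over a growing set with $\gamma$ nondecreasing. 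So it suffices to apply (\ref{601.1}) once at time $T$, which yields $T^2\{2\io\psi(\Theta_0)+\frac{\Lam|\Om|T}{\gamma_0(T)}\}$.

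The first term, $2\int_0^T\io|\na u|^2$, is bounded directly by twice the right-hand side of (\ref{62.1}) from Lemma \ref{lem62}.

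Adding the two bounds and dividing by $T$ gives the $\psi$-coefficient $\frac{8}{a^2T}+2T$, which is dominated by the stated $\frac{8}{a^2T}+4T$. The remaining coefficients come out as follows:
\bit
\item $\frac{12e^{2aT}}{T\gamma_0(T)}$ in front of $\io u_0^2$;
\item $\frac{8e^{2aT}}{a^2T\gamma_0(T)}$ in front of $\io u_{0t}^2$;
\item $\frac{2\Lam|\Om|e^{2aT}}{a^3T\gamma_0(T)}$ and $\frac{8\Lam|\Om|}{a^2\gamma_0(T)}$ from the Lemma \ref{lem62} part;
\item $\frac{\Lam|\Om|T^2}{\gamma_0(T)}$ from the Lemma \ref{lem601} part.
\eit
All of these are at most the corresponding terms in (\ref{64.1}), so the claim follows.

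The only point needing care is the monotonicity bookkeeping: using $\int_0^t\le\int_0^T$ for the inner integral, and $\gamma_0(t)\ge\gamma_0(T)$ for $t\le T$. The latter is not even needed if we evaluate at $T$ only. Everything else is direct substitution.
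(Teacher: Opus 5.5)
Your proposal is correct and follows the paper's proof: you integrate (\ref{63.1}) over $(0,T)$ and then insert Lemma \ref{lem62} and Lemma \ref{lem601}, each evaluated at the fixed time $T$. The only difference is that you integrate the factor $t$ exactly (giving $\frac{T^2}{2}$), whereas the paper first bounds $t\int_0^t \le T\int_0^T$; your version halves the $T$-dependent terms to $2T$ and $\frac{\Lam|\Om|T^2}{\gamma_0(T)}$, which is slightly sharper and trivially implies (\ref{64.1}).
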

\proof
  For fixed $T\in (0,\tm)$, from (\ref{63.1}) we obtain that
  \bas
	\frac{1}{T} \io |\na u_0|^2
	\le \frac{2}{T} \io |\na u(\cdot,t)|^2
	+ 2 \int_0^T \io |\na u_t|^2
	\qquad \mbox{for all } t\in (0,T),
  \eas
  which when integrated over $(0,T)$ implies that
  \bas
	\io |\na u_0|^2
	\le \frac{2}{T} \int_0^T \io |\na u|^2
	+ 2T \int_0^T \io |\na u_t|^2.
  \eas
  Since Lemma \ref{lem62} in conjunction with Lemma \ref{lem601} reveals that
  \bas
	& & \hs{-16mm}
	\frac{2}{T} \int_0^T \io |\na u|^2
	+ 2T \int_0^T \io |\na u_t|^2 \\
	&\le& \frac{2}{T} \cdot \bigg\{
	\frac{4}{a^2} \io \psi(\Theta_0) 
	+ \frac{6 e^{2aT}}{\gamma_0(T)} \cdot \io u_0^2
	+ \frac{4 e^{2aT}}{a^2\gamma_0(T)} \cdot \io u_{0t}^2
	+ \frac{\Lam |\Om| e^{2aT}}{a^3\gamma_0(T)}
	+ \frac{4\Lam |\Om| T}{a^2 \gamma_0(T)}
	\bigg\} \nn\\
	& & + 2T\cdot\bigg\{
	2 \io \psi(\Theta_0)
	+ \frac{\Lam |\Om| T}{\gamma_0(T)} \bigg\},
  \eas
  upon rearrangement this establishes (\ref{64.1}).
\qed
\subsection{Conclusion. Proofs of Theorem \ref{theo65} and Theorem \ref{theo66}}
The first of the announced main results on finite-time blow-up in (\ref{0}) can now be achieved by simply making sure
that (\ref{64.1}) must be violated if (\ref{65.1}) is satisfied with some appropriately large $C$:\abs
\proofc of Theorem \ref{theo65}. \quad
  Given $a>0$ and functions $\gamma$ and $f$ fulfilling (\ref{gf}), (\ref{gp}), (\ref{gi}) and (\ref{gf1})
  with some $\Lam>0$,
  we take $\psi$ from (\ref{psi}), and for fixed $T>0$ 
  we choose any $C=C(a,\gamma,f,T)>0$ large enough fulfilling
  \be{65.34}
	C>\frac{12 e^{2aT}}{T\gamma(0)}
  \ee
  and
  \be{65.35}
	C>\frac{8 e^{2aT}}{a^2 T\gamma(0)}
  \ee
  as well as
  \be{65.36}
	C> \Big\{ \frac{8}{a^2 T} + 4T\Big\} \cdot  |\Om| \psi(0) 
	+ \frac{2\Lam |\Om| e^{2aT}}{a^3 T\gamma(0)}
	+ \frac{8 \Lam |\Om|}{a^2\gamma(0)}
	+ \frac{2\Lam |\Om| T^2}{\gamma(0)}.
  \ee
  Then assuming that $D>0$, and that (\ref{Init}) and (\ref{65.1}) hold, we first observe that by 
  monotonicity of $\gamma$, the number defined in (\ref{g0}) satisfies $\gamma_0(T)\ge \gamma(0)$, 
  and apart from that we note that since the function $\psi$ from (\ref{psi}) satisfies $\psi(\xi)\le \psi(0)$ 
  for all $\xi\ge 0$, we can estimate
  \bas
	\io \psi(\Theta_0) \le |\Om| \psi(0).
  \eas
  From (\ref{65.34})-(\ref{65.36}) we therefore obtain that 
  \bea{65.6}
	\io |\na u_0|^2
	&>& \frac{12 e^{2aT}}{T\gamma(0)} \io u_0^2
	+ \frac{8 e^{2aT}}{a^2 T\gamma(0)} \io u_{0t}^2 \nn\\
	& & +\Big\{ \frac{8}{a^2 T} + 4T \Big\} \cdot |\Om| \psi(0) 
	+ \frac{2\Lam |\Om| e^{2aT}}{a^3 T\gamma(0)}
	+ \frac{8 \Lam |\Om|}{a^2 \gamma(0)}
	+ \frac{2\Lam |\Om| T^2}{\gamma(0)} \nn\\
	&\ge& \frac{12 e^{2aT}}{T\gamma_0(T)} \io u_0^2
	+ \frac{8 e^{2aT}}{a^2 T\gamma_0(T)} \io u_{0t}^2 \nn\\
	& & + \Big\{\frac{8}{a^2 T} + 4T \Big\} \cdot \io \psi(\Theta_0)
	+ \frac{2\Lam |\Om| e^{2aT}}{a^3 T\gamma_0(T)}
	+ \frac{8 \Lam |\Om|}{a^2 \gamma_0(T)}
	+ \frac{2\Lam |\Om| T^2}{\gamma_0(T)}.
  \eea
  We therefore indeed must have $\tm\le T$, for otherwise we could employ Lemma \ref{lem64} to see that (\ref{65.6}) cannot hold.
\qed
To see that also the assumptions of Theorem \ref{theo66} enforce blow-up by a given time $T>0$ of the number $C$ in
(\ref{66.3}) is sufficiently large, let us briefly record the outcome of a simple comparison argument, once again explicitly
drawing on the nonnegativity of the heat source $\gamma(\Theta)|\na u_t|^2$: 
\begin{lem}\label{lem99}
  If $a>0$ and $D>0$, and if (\ref{gf}), (\ref{gp}), (\ref{gi}), (\ref{gf1}) and (\ref{Init}) hold, then
  \be{99.1}
	\Theta(x,t) \ge \inf_\Om \Theta_0 - \frac{\Lam t}{4}
	\qquad \mbox{for all $x\in\Om$ and } t\in (0,\tm).
  \ee
\end{lem}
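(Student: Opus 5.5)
The plan is a pointwise completion of the square in the heat source, followed by a parabolic comparison with a spatially homogeneous subsolution.

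First, I bound the heat source from below. For $\xi\in\R^n$ and $\Theta\ge0$, Young's inequality gives $|f(\Theta)\cdot\xi|\le\gamma(\Theta)|\xi|^2+\frac{|f|^2(\Theta)}{4\gamma(\Theta)}$. Hence
\[
\gamma(\Theta)|\xi|^2+f(\Theta)\cdot\xi\ \ge\ -\frac{|f|^2(\Theta)}{4\gamma(\Theta)}\ \ge\ -\frac{\Lam}{4}
\]
by (\ref{gf1}). Taking $\xi=\na u_t$ in the second equation of (\ref{0}) shows that $\Theta$ satisfies
\[
\Theta_t\ge D\Del\Theta-\frac{\Lam}{4}\qquad\mbox{in }\Om\times(0,\tm),
\]
with $\frac{\pa\Theta}{\pa\nu}=0$ on $\pO$. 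This step uses only (\ref{gf}) and (\ref{gf1}); the monotonicity hypothesis (\ref{gp}) is not needed here.

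Next, I set $\un\theta(t):=\inf_\Om\Theta_0-\frac{\Lam t}{4}$, which solves $\un\theta_t=D\Del\un\theta-\frac{\Lam}{4}$ with homogeneous Neumann data and satisfies $\un\theta(0)\le\Theta_0$. The difference $w:=\Theta-\un\theta$ then satisfies $w_t\ge D\Del w$ in $\Om\times(0,\tm)$, $\frac{\pa w}{\pa\nu}=0$ on $\pO$, and $w(\cdot,0)\ge0$.

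To conclude $w\ge0$ while respecting the regularity from Proposition \ref{prop_loc} ($C^{2,1}$ only for positive times, continuity up to $t=0$), I would test with the negative part $w_-:=\max\{-w,0\}$. On each $(t_0,t)$ with $t_0>0$, the Neumann condition gives
\[
\frac12\frac{d}{dt}\io w_-^2=-\io w_-w_t\le-D\io w_-\Del w=-D\io|\na w_-|^2\le0 .
\]
Hence $\io w_-^2(\cdot,t)\le\io w_-^2(\cdot,t_0)$. Letting $t_0\searrow0$ and using $\Theta\in C^0(\bom\times[0,\tm))$ gives $\io w_-^2(\cdot,t)\le\io w_-^2(\cdot,0)=0$. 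Therefore $w\ge0$, which is (\ref{99.1}).

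An alternative to the energy argument is the classical weak maximum principle applied to $w+\varepsilon t$, combined with the Hopf boundary lemma. The only mildly delicate point is the passage $t_0\to0$, and the stated continuity of $\Theta$ at $t=0$ handles it. I do not expect any genuine obstacle.
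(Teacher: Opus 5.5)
Your proposal is correct and follows the paper's proof. Both use the same pointwise Young bound $\gamma(\Theta)|\na u_t|^2+f(\Theta)\cdot\na u_t\ge-\frac{|f|^2(\Theta)}{4\gamma(\Theta)}\ge-\frac{\Lam}{4}$ and compare with the spatially homogeneous subsolution $\inf_\Om\Theta_0-\frac{\Lam t}{4}$; the only difference is that the paper simply cites a comparison principle, while you prove it directly by testing with the negative part of $w=\Theta-\inf_\Om\Theta_0+\frac{\Lam t}{4}$ and letting $t_0\searrow 0$ via continuity at $t=0$, which is valid and a bit more self-contained.
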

\proof
  From (\ref{0}) and Young's inequality it follows that thanks to (\ref{gf1}),
  \bas
	\Theta_t 
	&=& D\Del\Theta + \gamma(\Theta) |\na u_t|^2
	+ f(\Theta)\cdot\na u_t \\
	&\ge& D\Del\Theta - \frac{|f|^2(\Theta)}{4\gamma(\Theta)} \\
	&\ge& D\Del\Theta - \frac{\Lam}{4}
	\qquad \mbox{in } \Om\times (0,\tm).
  \eas
  On the other hand, for
  \bas
	\un{\Theta}(x,t):=\Theta_\star - \frac{\Lam t}{4},
	\qquad x\in\bom, \ t\ge 0,
  \eas
  with $\Theta_\star:=\inf_\Om \Theta_0$, we have
  \bas
	\un{\Theta}_t - D\Del\un{\Theta} + \frac{\Lam}{4}
	= - \frac{\Lam}{4} + \frac{\Lam}{4}=0
	\qquad \mbox{in } \Om\times (0,\infty),
  \eas
  so that since $\un{\Theta}(x,0)=\Theta_\star \le \Theta(x,0)$ for all $x\in\Om$, and since 
  clearly $\frac{\pa \un{\Theta}}{\pa\nu}=0$ on $\pO\times (0,\infty)$,
  a comparison principle asserts that $\Theta\ge\un{\Theta}$ in $\Om\times (0,\tm)$, which is equivalent to (\ref{99.1}).
\qed
Indeed, the previous lemma along with the monotonicity of $\gamma$ implies that assuming (\ref{66.3}) 
with suitably large $C$ ensures smallness not only of the integral $\io \psi(\Theta_0)$ but also of the factor
$\frac{1}{\gamma_0(T)}$ appearing in all further summands in (\ref{64.1}). 
Therefore, also our result on blow-up driven by large initial temperature distributions is a consequence of Lemma \ref{lem64}:\abs
\proofc of Theorem \ref{theo66}. \quad
  We note that (\ref{gi}) implies that for $\psi$ as in (\ref{psi}) we have $\psi(\xi)\to 0$ as $\xi\to\infty$, and that
  together with (\ref{gp}) this means that necessarily $\gamma(\xi)\to+\infty$ as $\xi\to\infty$.
  Given $\eta>0$ and $M>0$ we can therefore choose $C=C(a,\gamma,f,T,\eta,M)>0$ suitably large such that besides the inequality
  \be{66.4}
	C \ge \frac{\Lam T}{2},
  \ee
  we also have
  \be{66.5}
	c_1 \cdot |\Om| \psi(C) \le \frac{\eta}{4}
  \ee
  and
  \be{66.6}
	\frac{c_2 M}{\gamma(\frac{C}{2})} 
	+ \frac{c_3 M}{\gamma(\frac{C}{2})} 
	+ \frac{c_4}{\gamma(\frac{C}{2})} 
	\le \frac{\eta}{4},
  \ee
  where we have set
  \be{66.7}
	c_1\equiv c_1(a,T):= \frac{8}{a^2 T} + 4T
  \ee
  and
  \be{66.8}
	c_2\equiv c_2(a,T):= \frac{12 e^{2aT}}{T} 
  \ee
  and
  \be{66.9}
	c_3\equiv c_3(a,T):= \frac{8 e^{2aT}}{a^2 T}
  \ee
  as well as
  \be{66.10}
	c_4\equiv c_4(a,\gamma,f,T)
	:= \frac{2\Lam |\Om| e^{2aT}}{a^3}
	+ \frac{8 \Lam |\Om|}{a^2}
	+ 2\Lam |\Om| T^2.
  \ee
  Now if $D>0$, and if $(u_0,u_{0t},\Theta_0)$ satisfies (\ref{Init}), (\ref{66.1}), (\ref{66.2}) and (\ref{66.3}),
  assuming the corresponding solution to have the property that $\tm>T$
  we could combine Lemma \ref{lem99} with (\ref{66.3}) and (\ref{66.4}) to estimate
  \bas
	\Theta(x,t)
	\ge C - \frac{\Lam T}{4}
	\ge \frac{C}{2}
	\qquad \mbox{for all $x\in\Om$ and } t\in (0,T),
  \eas
  by monotonicity of $\gamma$ meaning that the quantity from (\ref{g0}) would satisfy
  \be{66.11}
	\gamma_0(T)\ge \gamma\Big(\frac{C}{2}\Big).
  \ee
  As (\ref{66.3}) furthermore ensures that, by an evident monotonicity property of $\psi$,
  \bas
	\io \psi(\Theta_0) \le |\Om| \psi(C),
  \eas
  in line with (\ref{66.7})-(\ref{66.10}) an application of Lemma \ref{lem64} would thus show that
  due to (\ref{66.2}),
  \bas
	\io |\na u_0|^2
	&\le& c_1 \io \psi(\Theta_0)
	+ \frac{c_2}{\gamma_0(T)} \io u_0^2
	+ \frac{c_3}{\gamma_0(T)} \cdot \io u_{0t}^2 
	+ \frac{c_4}{\gamma_0(T)} \\
	&\le& c_1 |\Om| \psi(C)
	+ \frac{c_2 M}{\gamma(\frac{C}{2})}
	+ \frac{c_3 M}{\gamma(\frac{C}{2})}
	+ \frac{c_4}{\gamma(\frac{C}{2})}.
  \eas
  But in conjunction with (\ref{66.5}) and (\ref{66.6}) this would imply that
  \bas
	\io |\na u_0|^2
	\le \frac{\eta}{4} + \frac{\eta}{4} <\eta,
  \eas
  contrary to our assumption in (\ref{66.1}).
  Accordingly, our hypothesis that $\tm\ge T$ must have been false, so that the claim becomes
  a consequence of Proposition \ref{prop_loc}.
\qed

\bigskip

{\bf Acknowlegements.} \quad
The author acknowledges support of the Deutsche Forschungsgemeinschaft (Project No. 444955436).\abs
{\bf Conflict of interest statement.} \quad
The author declares that he has no conflict of interest.\abs 
{\bf Data availability statement.} \quad
Data sharing is not applicable to this article as no datasets were
generated or analyzed during the current study.\abs

\end{document}